\newtheorem{theorem}{Theorem}[section]
\newtheorem{proposition}{Proposition}
\theoremstyle{definition}
\newtheorem{definition}[theorem]{Definition}
\newtheorem{scheme}{Scheme}[section]
\newtheorem{remark}{Remark}[section]
\newcommand{\re}[1]{\textcolor{black}{#1}}
\newcommand{\mg}{\mathcal{G}}
\newcommand{\mw}{\mathcal{W}}
\newcommand{\mn}{\mathcal{N}}
\newcommand{\mf}{\mathcal{F}}
\newcommand{\mr}{\mathcal{R}}
\newcommand{\mbn}{\bm{n}}
\newcommand{\mbr}{\bm{r}}
\newcommand{\mbh}{\bm{h}}
\newcommand{\bbR}{\mathbb{R}}
\newcommand{\hphi}{\hat{\phi}}
\newcommand{\bphi}{\bar{\phi}}
\newcommand{\tep}{\tilde{\varepsilon}}
\newcommand{\tal}{\tilde{\alpha}}
\newcommand{\inner}[1]{\left<#1\right>_{AP}}
\newcommand{\norm}[1]{\left\|#1\right\|^{2}_{AP}}
\newcommand\tbbint{{-\mkern -16mu\int}}
\newcommand\dbbint{{-\mkern -19mu\int}}
\newcommand\bbint{
	{\mathchoice{\dbbint}{\tbbint}{\tbbint}{\tbbint}}
}
\title[High-order energy stable schemes for iPFC model]
{High-order energy stable schemes of incommensurate phase-field crystal model}
\author[Kai Jiang and Wei Si]{}
\keywords{Incommensurate phase-field crystal model, high-order energy stable schemes,
	spectral deferred correction method, projection method, quasicrystals.}
\email{kaijiang@xtu.edu.cn}
\email{201610111098@smail.xtu.edu.cn}
\thanks{$^*$ Corresponding author: Kai Jiang, Email: kaijiang@xtu.edu.cn}
\begin{document}
	\maketitle
	
	\centerline{\scshape Kai Jiang$^*$ and  Wei Si}
	\medskip
	{\footnotesize
		\centerline{School of Mathematics and Computational Science}
		\centerline{Hunan Key Laboratory for Computation and Simulation in Science and Engineering}
		\centerline{Xiangtan University, Hunan, Xiangtan 411105,  China}
	} 
	
%
%
%

\begin{abstract}
This article focuses on the development of high-order energy stable schemes for the multi-length-scale
incommensurate phase-field crystal model which is able to study the phase behavior of aperiodic structures.
These high-order schemes based on the scalar auxiliary variable (SAV) and spectral
deferred correction (SDC) approaches are suitable for the $L^2$ gradient flow
equation, \textit{i.e.}, the Allen-Cahn dynamic equation.
Concretely, we propose a second-order Crank-Nicolson (CN) scheme of the SAV system,
prove the energy dissipation law, and give the error estimate in the almost periodic
function sense.
Moreover, we use the SDC method to improve the computational accuracy of the SAV/CN scheme.
Numerical results demonstrate the advantages of high-order numerical methods in numerical computations
and show the influence of length-scales on the formation of ordered structures.

%
\end{abstract}


\section{Introduction}
\label{sec:intro}

Aperiodic crystals, such as quasicrystals, are an important class of materials
whose Fourier spectra cannot be all expressed by a set of basis vectors over the rational number field.
The irrational coefficients give rise to the denseness of Fourier spectra which
results in the difficulties in the theoretical study.
Theoretically, a multiple characteristic length-scale model which possesses, at least, an
irrational scale, has been widely applied to study the formation and
thermodynamic stability of the aperiodic structures\,\cite{
Bak1985, Jaric1985, Lifshitz1997, Jiang2017, Savitz2018, Jiang2019}.
The early model could trace back to Bak's work on three-dimensional
icosahedral quasicrystals. Since then, many related models have been proposed to study
aperiodic structures, including for multicomponent systems\,\cite{Jiang2019}.
Among these models, Lifshitz and Petrich (LP) modified the
Swift-Hohenberg model and explicitly added an incommensurate two-length-scale
potential into a Lyapunov functional to explore quasiperiodic patterns that emerged in
Faraday experiments\,\cite{Lifshitz1997}.
Recently, Savitz et al. extended the LP model from two-length-scale potential to multiple
($\geq 3$) length-scale potential and studied more kinds of quasicrystals\,\cite{Savitz2018}.
The $m$-length-scale model actually is an incommensurate multi-length-scale
phase-field-crystal (iPFC) model who owns a $4m$ ($m\in\mathbb{N}$) order
differential operator and nonlinear term in the energy functional.
A high precision computation is helpful to study the phase behaviors of
aperiodic structures.
In this article, we will pay attention to the development of high-order numerical
methods for the iPFC model.

Recently, various numerical methods have been proposed to solve phase-field equations
including the convex splitting methods\,\cite{Wise2009},
the linear stabilized schemes \cite{Shen2010}, the invariant energy quadratization (IEQ)\,\cite{Yang2016}
and the scalar auxiliary variable (SAV) approaches\,\cite{Shen2018, Li2019}.
The convex splitting method splits the energy functional into the convex and concave parts.
The method treats the convex part implicitly and the concave one explicitly to keep the unconditional
energy stability.
While the application of this method is restricted by the form of the energy
functional, such as double-well bulk energy.
The linear stabilized scheme adds a penalty term to improve its stability and deals with
the nonlinear terms explicitly for implementing it easily.
However, such a stabilized approach makes it difficult to design second-order unconditionally energy stable schemes.
Assume that the nonlinear part has a lower bound, via introducing an auxiliary variable, the IEQ method
transforms the energy functional into a quadratic form to keep the unconditional energy dissipation property.
Similarly, the SAV approach introduces a scalar auxiliary variable by supposing the bounded bulk energy and
obtains an unconditionally energy stable system.
Besides these methods, the spectral deferred correction (SDC)\,\cite{Dutt2000,Xu2019} algorithm is an efficient strategy
to improve the accuracy of the above schemes.
In the paper, we will apply the SAV approach to solve the time-dependent equations and further use the SDC strategy
to improve the numerical accuracy.

For aperiodic structures, two kinds of numerical methods, including
the crystalline approximant method (CAM) and the projection method (PM) are
usually used to discretize the quasiperiodic functions\,\cite{Jiang2014}.
The CAM uses a big periodic structure to approximate an aperiodic structure and
corresponds to the Diophantine approximation problem which studies
how to approximate irrational numbers by rational numbers\,\cite{Daven1946}.
To evaluate aperiodic structures accurately, the CAM needs an extremely big computational
region with an unacceptable computational burden to reduce the error of Diophantine approximation.
To avoid the Diophantine approximation problem, the PM accurately describes
aperiodic structures based on the fact that the aperiodic structure can be regarded
as a periodic crystal in an appropriate higher-dimensional space.
The PM uses one higher-dimensional periodic region to capture the essential characteristics and
greatly reduces computational complexity.

The rest of the paper is organized as follows.
In Section\,\ref{sec:model}, we first outline some useful preliminaries of the almost
periodic functions and then present the iPFC model.
The energy dissipation principle of the $L^2$ gradient flow for the iPFC model in the
almost periodic sense is also given.
In Section\,\ref{sec:theory}, we propose a second-order energy stable scheme for the
iPFC model and give the corresponding error estimate.
And we use the SDC approach to improve its computational accuracy efficiently.
Section \ref{sec:results} presents the convergence rates of these numerical schemes and discusses the
advantages of high-order numerical approaches in simulating dynamic evolution.
Moreover, we also show the influence of multiple length-scales on the thermodynamic stability of
aperiodic structures.
There are some conclusions in Section\,\ref{sec:summary}.

\section{Problem formulation}
\label{sec:model}

\subsection{Preliminary}
\label{subsec:AP}


Aperiodic structures are space-filling phases without decay.
A useful mathematical theory to describe aperiodic structures is the almost periodic
function theory which is a generalization of continuous periodic functions. We
define the notation of a $d$-dimensional almost periodic function.
\begin{definition}
	Let $f(\mbr)$ be a real-valued or complex-valued function defined on $\bbR^d$ and
	let $\epsilon > 0$. We say that $\zeta\in\bbR^d$ is an $\epsilon$-almost
	period of $f$ if
	\begin{align*}
		| f(\mbr-\zeta) - f(\mbr)| < \epsilon, ~\mbox{~~for any ~} \mbr\in\bbR^d.
	\end{align*}
	A function $f$ is \textit{almost periodic} on $\bbR^d$ if it is continuous and if for
	every $\epsilon$ there exists a number $L=L(\epsilon, f)$ such that any
	cube with the side length of $L$ on $\bbR^d$ contains an $\epsilon$-almost period
	of $f$.
\end{definition}
The almost periodic $L^2$ inner product is defined by
\begin{equation}
	\inner{f,g} = \lim_{R\to\infty} \frac{1}{|Q(R)|} \int_{Q(R)} f(\mbr) \re{\overline{g(\mbr)}}\,d\mbr,
	\label{eq:definite.AP.inner}
\end{equation}
where $Q(R)=[-R,R]^d$ and $|Q(R)|$ is the measure of $Q(R)$.
It is known that this inner product is well-defined\,\cite{Corduneanu1989}.
We denote the corresponding norm by $\norm{\cdot} = \inner{\cdot,\cdot}$.
For simplicity, we denote the average spacial integral over the whole space as
\begin{equation}
	\bbint =  \lim_{R\to\infty} \frac{1}{|Q(R)|} \int_{Q(R)}.
\end{equation}
Some useful properties of $d$-dimensional almost periodic functions are presented as follows.
\begin{proposition}
\label{thm:AP}
		\begin{enumerate}[(1).]
		\item An almost periodic function is uniformly continuous and\break bounded.
		\item If $f(\mbr)$ and $g(\mbr)$ are almost periodic functions, then $f(\mbr) + g(\mbr)$
			and $f(\mbr) \cdot g(\mbr)$ are almost periodic functions.
	\end{enumerate}
\end{proposition}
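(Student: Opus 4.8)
The plan is to establish each of the two properties in Proposition~\ref{thm:AP} directly from the definition of an almost periodic function, treating the continuity/boundedness statement and the closure-under-operations statement separately.

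For part (1), I would first argue uniform continuity. Fixing $\varepsilon>0$, I take the associated length $L=L(\varepsilon/3,f)$ from the definition, so that every cube of side $L$ contains an $(\varepsilon/3)$-almost period. Since $f$ restricted to a sufficiently large compact cube is uniformly continuous (being continuous on a compact set), I obtain a $\delta>0$ controlling oscillations there; the key point is then to transfer this local modulus of continuity to all of $\bbR^d$ by translating any pair of nearby points back into the fixed cube using an almost period, and summing the three $\varepsilon/3$ contributions via the triangle inequality $|f(\mbr)-f(\mbr')|\le|f(\mbr)-f(\mbr-\zeta)|+|f(\mbr-\zeta)-f(\mbr'-\zeta)|+|f(\mbr'-\zeta)-f(\mbr')|$. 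For boundedness, I would pick $\varepsilon=1$, let $M=\sup_{Q}|f|$ on the fixed cube $Q$ of side $L(1,f)$, and note that any $\mbr\in\bbR^d$ can be shifted by an almost period $\zeta$ lying in a cube of side $L$ around it so that $\mbr-\zeta\in Q$; then $|f(\mbr)|\le|f(\mbr)-f(\mbr-\zeta)|+|f(\mbr-\zeta)|<1+M$.

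For part (2), the sum is the easy case: given $\varepsilon>0$, a single $\zeta$ that is simultaneously an $(\varepsilon/2)$-almost period of both $f$ and $g$ makes it an $\varepsilon$-almost period of $f+g$, so the real content is showing such common almost periods exist in every sufficiently large cube. I would handle this by the standard intersection argument, choosing $L=\max(L(\varepsilon/2,f),L(\varepsilon/2,g))$ is \emph{not} enough by itself; the honest route is to invoke the simultaneous-almost-period property, which follows from the relative denseness of almost periods. For the product, I would use boundedness from part (1): writing $|f(\mbr-\zeta)g(\mbr-\zeta)-f(\mbr)g(\mbr)|\le|f(\mbr-\zeta)||g(\mbr-\zeta)-g(\mbr)|+|g(\mbr)||f(\mbr-\zeta)-f(\mbr)|$ and bounding $|f|,|g|$ by a constant $M$, a common $(\varepsilon/2M)$-almost period of $f$ and $g$ yields an $\varepsilon$-almost period of $fg$.

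The main obstacle is the existence of \emph{common} almost periods for the sum and product, i.e.\ establishing that in every large cube one can find a single $\zeta$ serving both $f$ and $g$ simultaneously. The naive choice of a cube side $L$ accommodating the individual functions does not guarantee a shared $\zeta$, so this step genuinely requires the simultaneous approximation theorem for almost periodic functions (equivalently, that the set of $\varepsilon$-almost periods is relatively dense and that finite families admit a common relatively dense set of approximate periods). I would either cite this standard fact from the almost periodic function theory~\cite{Corduneanu1989} or reduce it to Bochner's characterization via relative compactness of the family of translates; everything else reduces to triangle-inequality bookkeeping with the uniform bound from part (1).
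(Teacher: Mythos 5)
Your proposal is correct in outline and is in fact considerably more substantive than what the paper provides: the paper's entire ``proof'' of Proposition~\ref{thm:AP} is the single remark that the properties ``can be easily proven from the one-dimensional results''~\cite{Corduneanu1989}, i.e.\ a reduction-by-citation, whereas you reconstruct the classical Bohr-style arguments directly from the $d$-dimensional definition. Your part~(1) is the standard and complete argument (translate any point, or any nearby pair of points, into a fixed compact cube of side slightly larger than $L$ by an almost period, then use the triangle inequality together with compactness); the only bookkeeping to make explicit is that the fixed cube must have side $L+\delta_0$ so that both $\mbr-\zeta$ and $\mbr'-\zeta$ land inside it. Your part~(2) correctly isolates the one genuinely nontrivial step --- the existence of a \emph{common} $\varepsilon$-almost period for $f$ and $g$ in every sufficiently large cube, which does not follow from taking the larger of the two individual lengths $L(\varepsilon/2,f)$, $L(\varepsilon/2,g)$ --- and you are right that this requires either the simultaneous-approximation theorem or Bochner's normality characterization; most students (and many papers) silently elide this point, so flagging it is a merit, not a gap. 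Since you cite~\cite{Corduneanu1989} for exactly that lemma while the paper cites the same source for everything, your argument is strictly more detailed than the paper's and relies on strictly less imported machinery. The product estimate via the splitting $|f(\mbr-\zeta)g(\mbr-\zeta)-f(\mbr)g(\mbr)|\le|f(\mbr-\zeta)|\,|g(\mbr-\zeta)-g(\mbr)|+|g(\mbr)|\,|f(\mbr-\zeta)-f(\mbr)|$ together with the bound $M$ from part~(1) is exactly the standard route and is correct. In short: no gap beyond the explicitly acknowledged appeal to the common-almost-period theorem, and your approach buys a self-contained $d$-dimensional proof where the paper offers only a pointer to the one-dimensional literature.
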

These properties can be easily proven from the one-dimensional
results\,\cite{Corduneanu1989}.

\begin{theorem}
\label{thm:Green}
	If $f(\mbr)$ and $g(\mbr)$ are almost periodic functions and differentiable,
	then the Green's identity holds in the almost periodic sense, \textit{i.e.},
	\begin{equation}
		\left< f(\mbr), \nabla g(\mbr) \right>_{AP} = - \left< \nabla f(\mbr), g(\mbr) \right>_{AP}.
		\label{eq:Green}\vspace*{-10pt}
	\end{equation}
\end{theorem}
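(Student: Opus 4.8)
The plan is to prove the identity one coordinate at a time. Writing $\nabla=(\partial_1,\dots,\partial_d)$, it suffices to show, for each index $i$, the scalar identity $\inner{f,\partial_i g}=-\inner{\partial_i f,g}$; the vector statement \eqref{eq:Green} is then obtained by collecting these $d$ components. Since $\inner{f,g}=\bbint f\overline{g}$ and conjugation commutes with the real differential operator $\partial_i$ (so $\overline{\partial_i g}=\partial_i\overline{g}$), the scalar identity is equivalent to
\begin{equation*}
	\bbint\partial_i\!\left(f\overline{g}\right)=0 .
\end{equation*}
Indeed, by the Leibniz rule $\partial_i(f\overline{g})=(\partial_i f)\overline{g}+f\,\partial_i\overline{g}$, whose two terms average to $\inner{\partial_i f,g}$ and $\inner{f,\partial_i g}$ respectively, so that $\bbint\partial_i(f\overline{g})=\inner{\partial_i f,g}+\inner{f,\partial_i g}$; the vanishing of this sum is exactly the desired identity. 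Each average is well-defined: $h:=f\overline{g}$ is a product of almost periodic functions, hence almost periodic by Proposition \ref{thm:AP}(2), as are the two summands once one assumes, as is natural for the smooth fields of the model, that the first derivatives are themselves almost periodic. Thus the whole problem reduces to showing that the spatial average of $\partial_i h$ vanishes.

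To compute $\bbint\partial_i h$, I would fix $R>0$ and integrate $\partial_i h$ over the cube $Q(R)=[-R,R]^d$, using the fundamental theorem of calculus in the $x_i$ direction to collapse the integral onto the two opposite faces $\{x_i=\pm R\}$:
\begin{equation*}
	\int_{Q(R)}\partial_i h\,d\mbr
	=\int_{[-R,R]^{d-1}}\Bigl(\,h\big|_{x_i=R}-h\big|_{x_i=-R}\,\Bigr)\,d\mbr',
\end{equation*}
where $d\mbr'$ denotes integration over the remaining $d-1$ coordinates. This step is justified by the assumed continuous differentiability of $f$ and $g$, which makes $h$ of class $C^1$.

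The heart of the argument --- and the only genuine obstacle --- is to control this boundary term as $R\to\infty$. By Proposition \ref{thm:AP}(1) the almost periodic functions $f$ and $g$ are bounded, say $|f|,|g|\le M$, so the face integrand is bounded by $2M^2$ and
\begin{equation*}
	\left|\int_{Q(R)}\partial_i h\,d\mbr\right|\le 2M^2\,(2R)^{d-1}.
\end{equation*}
Dividing by $|Q(R)|=(2R)^d$ and passing to the limit gives
\begin{equation*}
	\left|\,\bbint\partial_i h\,\right|
	\le\lim_{R\to\infty}\frac{2M^2\,(2R)^{d-1}}{(2R)^d}
	=\lim_{R\to\infty}\frac{M^2}{R}=0 .
\end{equation*}
This is precisely where the almost periodic averaging does its work: the cube's boundary grows like $R^{d-1}$ while its volume grows like $R^d$, so the uniform boundedness supplied by Proposition \ref{thm:AP}(1) forces the boundary contribution to be annihilated by the averaging factor $1/|Q(R)|$. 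Reassembling the $d$ coordinate identities then yields \eqref{eq:Green}.
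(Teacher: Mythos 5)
Your proof is correct and follows essentially the same route as the paper's: integrate by parts over the cube $Q(R)$, bound the resulting boundary term by $M^{2}$ times the surface area $O(R^{d-1})$ using the boundedness of almost periodic functions, and let the division by $|Q(R)|=(2R)^{d}$ annihilate it in the limit. Your coordinate-wise reduction via the fundamental theorem of calculus and your explicit remark that the derivatives must themselves be almost periodic for the averages to be well-defined are minor refinements of the same argument, the latter being a hypothesis the paper leaves implicit.
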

\begin{proof}
	Since $f(\mbr)$ and $g(\mbr)$ are almost periodic functions, then
	there exists a constant $M$ such that $\sup\limits_{\mbr} \left\{
	\left|f(\mbr)\right|, \left|g(\mbr)\right| \right\} \leq M$.
	Denote
	\begin{equation}
		b_{R} = \lim_{R\to \infty} \frac{1}{|Q(R)|} \int_{\partial Q(R)} f(\mbr) \mbn
		\cdot \re{\overline{g(\mbr)}}\,ds,
	\end{equation}
	where $\mbn$ is the outward normal of $\partial Q(R)$.
	In the $d$-dimensional space, we have
	\begin{equation}
		|b_{R}| \leq \lim_{R\to \infty} \frac{M^{2} 2d(2R)^{d-1}}{(2R)^d} = 0.
	\end{equation}
	Therefore, we obtain the desired conclusion by
	\begin{equation}
		\begin{aligned}
			\left< f(\mbr), \nabla g(\mbr) \right>_{AP}
			& = \bbint f(\mbr) \nabla g(\mbr)\,d\mbr
			\\
			& = - \bbint \nabla f(\mbr) g(\mbr)\,d\mbr
			= - \left< \nabla f(\mbr), g(\mbr) \right>_{AP}.
		\end{aligned}
		\label{eq:Green.deduce}
	\end{equation}
\end{proof}
%

\subsection{Incommensurate phase-field crystal (iPFC) model}
\label{subsec:iPFC}

The simplest iPFC model may be the LP model which was originally
proposed to study the bi-frequency excited Faraday wave\,\cite{Lifshitz1997}.
Concretely, the free energy functional of the LP model can be written as
\begin{equation}
	F_{LP}[\psi(\mbr)] = \bbint \left\{ \frac{c}{2} \left[ (\Delta+1)
	(\Delta+q^{2}) \psi \right]^{2}
	+ \left( \frac{\varepsilon}{2} \psi^{2} - \frac{\alpha}{3} \psi^{3} +
	\frac{1}{4} \psi^{4} \right) \right\}\, d\mbr,
\end{equation}
where $q$ is an irrational number depending on the property of quasiperiodic structures.
The essential feature of the energy functional is the existence of two
characteristic length-scales, $1$ and $q$, which is a critical factor to
stabilize quasi-periodic structures.
The LP model has been also used to study the soft-matter quasicrystals\,\cite{lifshitz2007}.
However, this model could be able to globally stabilize the dodecagonal and decagonal rotational
quasicrystals\,\cite{Jiang2015PRE}.
To generalize the iPFC model, Savitz et al. extended the interaction potential from two
characteristic length-scales to multiple characteristic length-scales.
In particular, the Lyapunov functional of an $m$-length-scale incommensurate system can be written as
\begin{equation}
	\begin{aligned}
		F[\psi(\mbr)] = \bbint \Bigg\{ \frac{c}{2} \Big[\prod_{j=1}^m
		(\Delta+q_j^2) \psi \Big]^2
		+ \Big(\frac{\varepsilon}{2}\psi^2 - \frac{\alpha}{3}\psi^3 +
		\frac{1}{4}\psi^4 \Big) \Bigg\} \,d\mbr,
	\end{aligned}
    \label{eq:energy.origin.L2}
\end{equation}
where $\psi(\mbr)$, $\mbr\in\mathbb{R}^d$ $(d=1,2,3)$, is the order parameter corresponding to the
density profile of the system.
$q_{j}$ is the $j$-th characteristic length-scale which depends on the property of
aperiodic structures.
$c$ is a positive model parameter to ensure that the principle wavelengths are near the critical wavelengths.
$\varepsilon$ and $\alpha$ are both model parameters related to physical conditions, such as
temperature, pressure.
To reduce the number of model parameters, the iPFC energy functional can be rescaled by defining
$F = c^{2} \mf$, $\psi(\mbr) = \sqrt{c} \phi(\mbr)$, $\tep = c \varepsilon$, and $\tal = \sqrt{c} \alpha$.
The rescaled functional is
\begin{equation}
	\begin{aligned}
		\mf[\phi(\mbr)] = \bbint \Big\{ \frac{1}{2} [\mg\phi]^2 + \mn(\phi) \Big\} \,d\mbr
		 = \frac{1}{2} \|\mg\phi\|_{AP}^{2} + \left< \mn(\phi), 1 \right>_{AP},
	\end{aligned}
	\label{eq:energy.L2}
\end{equation}
where
\begin{equation}
	\mg = \prod_{j=1}^m (\Delta+q_j^2),
	\quad \quad	
	\mn(\phi) = \frac{\tep}{2}\phi^2 - \frac{\tal}{3}\phi^3 + \frac{1}{4}\phi^4.
\end{equation}

To solve the iPFC free energy functional, we consider the following Allen-Cahn dynamic equation
\begin{equation}
    \begin{aligned}
		\phi_{t} & = - \mw(\phi),
        \\
        \mw(\phi) & := \frac{\delta \mf}{\delta \phi} = \mg^2\phi + \mn'(\phi).
    \end{aligned}
    \label{eq:partial}
\end{equation}
The initial value is $\phi|_{t=0} = \phi_{0}$.
From Theorem \ref{thm:Green}, we can prove that the system \eqref{eq:partial}
satisfies the following energy dissipation law
\begin{equation}
	\frac{d\mf(\phi)}{dt}
	= \left< \frac{\delta \mf}{\delta\phi}, \phi_{t} \right>_{AP}
	= - \norm{\mw(\phi)} \leq 0.
	\label{eq:energy}
\end{equation}
Then we impose the following mean zero constraint of order parameter on
the iPFC model to ensure the mass conservation
\begin{equation}
	\bbint \phi(\mbr)\, d\mbr = 0 .
	\label{eq:mass.conserve}
\end{equation}

\section{Numerical methods}
\label{sec:theory}

In this section, we propose the second-order unconditionally energy stable
Crank-Nicolson scheme based on the SAV technique, and also give the error analysis.
Further, we use the SDC strategy to improve the temporal accuracy of the
second-order scheme.

\subsection{The scalar auxiliary variable (SAV) approach}
\label{subsec:SAV}

Let's suppose that $F_{1}(\phi)\break = \inner{\mn(\phi),1} + C_{1} \geq 0$, where $C_{1}$ is a positive constant.
We introduce a scalar auxiliary variable $\mr = \sqrt{F_{1}(\phi)}$ to transform \eqref{eq:partial}
into an equivalent system as
\begin{subequations}
    \begin{align}
		\phi_{t} & = - \mw(\phi),
    	\label{eq:partial.SAV.a}
        \\
        \mw(\phi) & = \mg^2 \phi + \frac{\mr}{\sqrt{F_{1}(\phi)}} \mn'(\phi),
    	\label{eq:partial.SAV.b}
        \\
		\mr_{t} & = \inner{ \frac{\mn'(\phi)}{2\sqrt{F_{1}(\phi)}}, \phi_{t} }.
    	\label{eq:partial.SAV.c}
    \end{align}
    \label{eq:partial.SAV}
\end{subequations}

\noindent By taking the almost periodic inner products of \eqref{eq:partial.SAV.a} with $\mw$, and
\eqref{eq:partial.SAV.b} with $-\phi_{t}$, multiplying \eqref{eq:partial.SAV.c}
with $2\mr$ and adding them together, we have the following energy dissipation property
\begin{equation}
	\frac{d}{dt} \left( \frac{1}{2} \norm{\mg\phi} + \mr^{2} - C_{1} \right) = -\norm{\mw} \leq 0.
	\label{eq:energy.SAV}
\end{equation}

%
The SAV approach can construct high-order unconditionally energy stable\break schemes.
In this section, we discuss a second-order semi-discrete scheme based on
the Crank-Nicolson method.
Suppose the time interval $[0,T]$ is divided into $N_{T}$ non-overlapping subintervals by the partition
$0=t^{0}<t^{1}<\cdots<t^{n}<\cdots<t^{N_{T}}=T$.
The time step size is $\tau^{n} = t^{n+1} - t^{n}$.
We denote $t^{n}+\tau^{n}/2$ as $t^{n+1/2}$.
Let $\phi^{n} = \phi(t^{n})$ and $\mr^{n} = \mr(t^{n})$.
\begin{scheme}[SAV/CN]
	\label{scheme:SAV.CN}
	For $n \geq 1$, given $\phi^{n}$, $\mr^{n}$ and $\phi^{n-1}$, $\mr^{n-1}$, we update $\phi^{n+1}$
	and $\mr^{n+1}$ by
	\begin{subequations}
		\begin{align}
		\frac{\phi^{n+1}-\phi^{n}}{\tau^{n}} & = - \mw^{n+1/2},
		\label{eq:SAV.CN.a}
		\\
		\mw^{n+1/2} & = \mg^2\phi^{n+1/2} + \frac{\mr^{n+1/2}}{\sqrt{F_{1}(\bphi^{n+1/2})}} \mn'(\bphi^{n+1/2}),
		\label{eq:SAV.CN.b}
		\\
		\frac{\mr^{n+1}-\mr^{n}}{\tau^{n}}
		& = \inner{\frac{\mn'(\bphi^{n+1/2})}{2\sqrt{F_{1}(\bphi^{n+1/2})}}, \frac{\phi^{n+1}-\phi^{n}}{\tau^{n}} },
		\label{eq:SAV.CN.c}
		\end{align}
		\label{eq:SAV.CN}
	\end{subequations}

\noindent
	where $\mr^{n+1/2} = (\mr^{n+1}+\mr^{n})/2$, $\phi^{n+1/2} = (\phi^{n+1}+\phi^{n})/2$
	and $\bphi^{n+1/2} = (3\phi^{n}-\phi^{n-1})/2$.
\end{scheme}

\begin{theorem}
	\label{thm:SAV.CN.energy}
	The SAV/CN scheme satisfies the following energy dissipation mechanism
	\begin{equation}
		\mf_{SAV/CN}^{n+1} - \mf_{SAV/CN}^{n} \leq 0,
	\end{equation}
	where
	\begin{equation}
		\mf_{SAV/CN}^{n} = \frac{1}{2} \norm{\mg\phi^{n}} + (\mr^{n})^{2} - C_{1}.
	\end{equation}
\end{theorem}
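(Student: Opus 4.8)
The plan is to reproduce, at the fully discrete level, the continuous computation that led to \eqref{eq:energy.SAV}. Concretely, I would test \eqref{eq:SAV.CN.a} with $\mw^{n+1/2}$ in the almost periodic inner product, test \eqref{eq:SAV.CN.b} with the increment $\phi^{n+1}-\phi^{n}$, and multiply \eqref{eq:SAV.CN.c} by $2\tau^{n}\mr^{n+1/2}$; then I add the three resulting identities. Writing $\delta\phi := \phi^{n+1}-\phi^{n}$, the first step gives $\inner{\delta\phi,\mw^{n+1/2}} = -\tau^{n}\norm{\mw^{n+1/2}}$ (recall that $\norm{\cdot}$ already denotes the squared norm), so after the combination the dissipative term $-\tau^{n}\norm{\mw^{n+1/2}}$ is isolated and everything reduces to showing that the remaining terms telescope into $\mf_{SAV/CN}^{n+1}-\mf_{SAV/CN}^{n}$.

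Two algebraic identities carry the argument. First, $\mg=\prod_{j=1}^{m}(\Delta+q_{j}^{2})$ is a real polynomial in $\Delta$, and iterating Green's identity (Theorem \ref{thm:Green}) shows $\inner{\Delta f,g}=\inner{f,\Delta g}$ with vanishing boundary contributions in the almost periodic sense; hence $\mg$, and therefore $\mg^{2}$, is self-adjoint, giving $\inner{\mg^{2}\phi^{n+1/2},\delta\phi}=\inner{\mg\phi^{n+1/2},\mg\,\delta\phi}$. Because $\phi^{n+1/2}=(\phi^{n+1}+\phi^{n})/2$ is the Crank--Nicolson midpoint, this expression telescopes into $\tfrac12\big(\norm{\mg\phi^{n+1}}-\norm{\mg\phi^{n}}\big)$. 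Second, with $\mr^{n+1/2}=(\mr^{n+1}+\mr^{n})/2$ one has the difference-of-squares identity $2\mr^{n+1/2}(\mr^{n+1}-\mr^{n})=(\mr^{n+1})^{2}-(\mr^{n})^{2}$, which produces the $\mr^{2}$ contribution to $\mf_{SAV/CN}$.

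The crux of the proof is the exact cancellation of the nonlinear terms. Testing \eqref{eq:SAV.CN.b} with $\delta\phi$ produces the term $\dfrac{\mr^{n+1/2}}{\sqrt{F_{1}(\bphi^{n+1/2})}}\inner{\mn'(\bphi^{n+1/2}),\delta\phi}$, while multiplying \eqref{eq:SAV.CN.c} by $2\tau^{n}\mr^{n+1/2}$ produces precisely the same quantity on its right-hand side and $(\mr^{n+1})^{2}-(\mr^{n})^{2}$ on its left-hand side. These two nonlinear contributions match identically because the same extrapolated state $\bphi^{n+1/2}=(3\phi^{n}-\phi^{n-1})/2$ and the same $\mr^{n+1/2}$ enter both \eqref{eq:SAV.CN.b} and \eqref{eq:SAV.CN.c}; this matching is the design feature of the SAV discretization. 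Assembling the three identities then yields
\begin{equation}
	\mf_{SAV/CN}^{n+1} - \mf_{SAV/CN}^{n} = -\tau^{n}\norm{\mw^{n+1/2}} \leq 0,
\end{equation}
where the constant $C_{1}$ cancels in the difference.

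I expect the genuinely delicate point to be not the cancellation itself but the justification that $\mg^{2}$ is self-adjoint in the almost periodic inner product: Theorem \ref{thm:Green} is stated only for the first-order gradient, so one must iterate it up to order $2m$ and verify that every boundary term vanishes under the $|Q(R)|^{-1}$ averaging, exactly as in the $O((2R)^{-1})$ estimate used in its proof. Once that is secured, the nonlinear cancellation is unconditional in $\tau^{n}$, so the scheme is unconditionally energy stable and no step-size restriction is needed.
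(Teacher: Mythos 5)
Your proposal is correct and follows essentially the same route as the paper: test \eqref{eq:SAV.CN.a} with $\mw^{n+1/2}$, test \eqref{eq:SAV.CN.b} with the increment, multiply \eqref{eq:SAV.CN.c} by $2\mr^{n+1/2}$, observe that the nonlinear terms cancel exactly because the same $\bphi^{n+1/2}$ and $\mr^{n+1/2}$ appear in both equations, and telescope the quadratic terms via the Crank--Nicolson midpoints. Your added remark on iterating Theorem \ref{thm:Green} to justify the self-adjointness of $\mg^{2}$ in the almost periodic inner product is a point the paper leaves implicit, and your bookkeeping of the factor $\tau^{n}$ is in fact slightly more careful than the paper's.
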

\begin{proof}
	We take the almost periodic inner products of \eqref{eq:SAV.CN.a} with $\tau^{n}\mw^{n+1/2}$,
	and \eqref{eq:SAV.CN.b} with $-(\phi^{n+1}-\phi^{n})$.
	\begin{equation}
		\inner{\phi^{n+1}-\phi^{n},\mw^{n+1/2}} = -\norm{\mw^{n+1/2}},
		\label{eq:SAV.CN.a.law}
	\end{equation}
	\begin{equation}
		\begin{aligned}
			-\inner{\phi^{n+1}-\phi^{n},\mw^{n+1/2}} & = -\inner{\phi^{n+1}-\phi^{n},\mg^2\phi^{n+1/2}}
			\\
			& -\inner{\phi^{n+1}-\phi^{n},\frac{\mr^{n+1/2}}{\sqrt{F_{1}(\bphi^{n+1/2})}} \mn'(\bphi^{n+1/2})}.
		\end{aligned}
		\label{eq:SAV.CN.b.law}
	\end{equation}
	Multiplying \eqref{eq:SAV.CN.c} with $2\mr^{n+1/2}$ yields
	\begin{equation}
		\begin{aligned}
			2\mr^{n+1/2}(\mr^{n+1}-\mr^{n})
			= \inner{\frac{\mr^{n+1/2}}{\sqrt{F_{1}(\bphi^{n+1/2})}}\mn'(\bphi^{n+1/2}), \phi^{n+1}-\phi^{n} }.
		\end{aligned}
		\label{eq:SAV.CN.c.law}
	\end{equation}
	Adding \eqref{eq:SAV.CN.a.law}, \eqref{eq:SAV.CN.b.law} and \eqref{eq:SAV.CN.c.law} together, we obtain
	\begin{equation}
		2\mr^{n+1/2}(\mr^{n+1}-\mr^{n}) = -\inner{\phi^{n+1}-\phi^{n},\mg^2\phi^{n+1/2}} - \norm{\mw^{n+1/2}}.
		\label{eq:SAV.CN.abc.law}
	\end{equation}
	Since $\mr^{n+1/2} = (\mr^{n+1}+\mr^{n})/2$ and $\phi^{n+1/2} = (\phi^{n+1}+\phi^{n})/2$,
	\eqref{eq:SAV.CN.abc.law} can be recast as
	\begin{equation}
		\frac{1}{2} \left( \norm{\mg\phi^{n+1}} - \norm{\mg\phi^{n}} \right)
		+ (\mr^{n+1})^{2}-(\mr^{n})^{2} = -\norm{\mw^{n+1/2}} \leq 0.
		\label{eq:SAV.CN.abc.law.re}
	\end{equation}
	The desired conclusion is obtained from the above equation.
\end{proof}

\begin{remark}
	It is noted that the modified energy $\mf_{SAV/CN}^{n}$ is different from the
	original energy $\mf(\phi^{n})$ since $\mr^{n}$ is obtained from the iteration process.
\end{remark}

\begin{remark}[The implementation of the SAV/CN scheme]
\label{remark:SAV.CN.implement}
	Denote
	\begin{equation}
		u(t^{n+1/2}) =	\frac{\mn'(\phi(t^{n+1/2}))}{\sqrt{F_{1}(\phi(t^{n+1/2}))}},
		\quad \quad
		u^{n+1/2} =	\frac{\mn'(\bphi^{n+1/2})}{\sqrt{F_{1}(\bphi{n+1/2})}}.
        \label{eq:SAV.CN.note.b}
	\end{equation}
	Substituting \eqref{eq:SAV.CN.b} and \eqref{eq:SAV.CN.c} into
	\eqref{eq:SAV.CN.a}, we obtain
    \begin{equation}
        \frac{\phi^{n+1}-\phi^{n}}{\tau^{n}} = - \left[ \mg^2 \phi^{n+1/2}
			+ u^{n+1/2} \left( \mr^{n} + \frac{1}{4} \inner{u^{n+1/2}, \phi^{n+1}-\phi^{n}} \right) \right].
		\label{eq:SAV.CN.phi.b}
    \end{equation}
	Eqn.\,\eqref{eq:SAV.CN.phi.b} can be rewritten as
    \begin{equation}
    	\begin{aligned}
    		& \quad (I + \frac{\tau^{n}}{2}\mg^2) \phi^{n+1}
				+ \frac{\tau^{n}}{4} u^{n+1/2} \inner{u^{n+1/2}, \phi^{n+1}}
    		\\
    		& = (I - \frac{\tau^{n}}{2}\mg^2) \phi^{n} - \tau^{n} \mr^{n} u^{n+1/2}
				+ \frac{\tau^{n}}{4} \inner{u^{n+1/2}, \phi^{n}} u^{n+1/2}.
    	\end{aligned}
        \label{eq:SAV.CN.mark.1}
    \end{equation}
	Taking the almost periodic inner product with $(I+\frac{1}{2}\tau^{n}\mg^2)^{-1} u^{n+1/2}$ leads to
    \begin{equation}
        \inner{u^{n+1/2}, \phi^{n+1}} + \frac{\tau^{n}}{4} \gamma^{n} \inner{u^{n+1/2}, \phi^{n+1}}
        = \inner{u^{n+1/2}, (I + \frac{\tau^{n}}{2}\mg^2)^{-1} c^{n}},
        \label{eq:SAV.CN.uphi}
    \end{equation}
    where
    \begin{align}
    	\gamma^{n} & = \inner{u^{n+1/2}, (I+\frac{\tau^{n}}{2}\mg^2)^{-1} u^{n+1/2}},
    	\\
    	c^{n} & = (I - \frac{\tau^{n}}{2}\mg^2) \phi^{n} - \tau^{n} \mr^{n} u^{n+1/2}
			+ \frac{\tau^{n}}{4} \inner{u^{n+1/2}, \phi^{n}} u^{n+1/2}.
    \end{align}
	From \eqref{eq:SAV.CN.uphi}, we get
    \begin{equation}
        \inner{u^{n+1/2}, \phi^{n+1}}
			= \frac{\inner{u^{n+1/2}, (I + \frac{1}{2} \tau^{n} \mg^2)^{-1} c^{n}}}{I + \frac{1}{4} \tau^{n} \gamma^{n}}.
		\label{eq:SAV.CN.mark.2}
    \end{equation}
	Then we can directly calculate $\phi^{n+1}$ via \eqref{eq:SAV.CN.mark.1} and \eqref{eq:SAV.CN.mark.2}.
\end{remark}

\subsection{Error estimate}
\label{subsec:error}

In this section, we will derive the error estimate of the SAV/CN scheme\,\ref{scheme:SAV.CN}.
Denote $e^{n} = \phi^{n} - \phi(t^{n})$, $w^{n+1} = \mw^{n+1} - \mw(t^{n+1})$,
and $r^{n} = \mr^{n} - \mr(t^{n})$, we have
\begin{theorem}
	\label{thm:SAV.CN.AC.err}
	For the Allen-Cahn dynamic equation, assume that $\phi^{0}$ is almost periodic and
	$\norm{ \phi_{t} }$ is bounded.
	Considering a uniform time partition, \textit{i.e.}, $\tau=\tau^{k}, k \leq N_{T}$,  we have
	\begin{equation}
		\norm{\mg e^{k}} + (r^{k})^{2}
		\leq C \tau^{4} \int_{0}^{t^{k}} \left( \norm{\phi_{ttt}(s)}
			+ |r_{ttt}(s)|^{2} \right) ds,
	\end{equation}
	where the constant $C$ is independent on $\tau$.
\end{theorem}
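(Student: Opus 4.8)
The plan is to run a discrete energy estimate for the error system that mirrors the stability argument of Theorem~\ref{thm:SAV.CN.energy}, and then to close it with a discrete Gronwall inequality. Writing $g(\phi)=\mn'(\phi)/\sqrt{F_{1}(\phi)}$, I first substitute the exact solution into Scheme~\ref{scheme:SAV.CN} to obtain the consistency equations
\begin{align*}
	\frac{\phi(t^{n+1})-\phi(t^{n})}{\tau}
	&= -\,\mg^{2}\,\frac{\phi(t^{n+1})+\phi(t^{n})}{2}
	- \mr(t^{n+1/2})\,g(\phi(t^{n+1/2})) + R_{\phi}^{n},\\
	\frac{\mr(t^{n+1})-\mr(t^{n})}{\tau}
	&= \inner{\tfrac{1}{2}g(\phi(t^{n+1/2})),\,\phi_{t}(t^{n+1/2})} + R_{\mr}^{n}.
\end{align*}
The Crank--Nicolson structure is essential to obtain the advertised right-hand side: the time-difference error and the midpoint-average error of the linear part are individually $O(\tau^{2})$ involving $\phi_{tt}$, but after using \eqref{eq:partial} to trade $\mg^{2}\phi_{tt}$ for time derivatives they combine, and together with the Peano integral-remainder form of the symmetric difference quotients one gets $\sum_{n}\tau\,\|R_{\phi}^{n}\|_{AP}^{2}\le C\tau^{4}\int_{0}^{t^{k}}\norm{\phi_{ttt}(s)}\,ds$ and $\sum_{n}\tau\,|R_{\mr}^{n}|^{2}\le C\tau^{4}\int_{0}^{t^{k}}|r_{ttt}(s)|^{2}\,ds$, which is exactly the claimed bound. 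I would take the two starting errors $e^{0},e^{1},r^{0},r^{1}$ to be $O(\tau^{2})$ (produced by a second-order one-step starter), so they enter at the stated order.

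Subtracting the scheme from the consistency equations gives the error equation $(e^{n+1}-e^{n})/\tau = -\,w^{n+1/2}-R_{\phi}^{n}$ with
\[
	w^{n+1/2}=\mg^{2}\,\frac{e^{n+1}+e^{n}}{2}
	+ r^{n+1/2}\,g(\bphi^{n+1/2})
	+ \mr(t^{n+1/2})\,\Delta g^{n},\qquad
	\Delta g^{n}=g(\bphi^{n+1/2})-g(\phi(t^{n+1/2})),
\]
together with a scalar equation for $r^{n}$. Following Theorem~\ref{thm:SAV.CN.energy}, I take the almost periodic inner product of the $\phi$-error equation with $\tau\,w^{n+1/2}$ and multiply the $r$-error equation by $2\tau\,r^{n+1/2}$. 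By the self-adjointness of $\mg^{2}$ for $\inner{\cdot,\cdot}$ (two applications of Theorem~\ref{thm:Green}) the linear term yields $\tfrac12(\norm{\mg e^{n+1}}-\norm{\mg e^{n}})$, and the decisive point is that the indefinite coupling $r^{n+1/2}\inner{e^{n+1}-e^{n},\,g(\bphi^{n+1/2})}$ generated by the two tests cancels identically, exactly as in the energy law. Adding the two identities and telescoping in $n$ leaves
\[
	\tfrac12\,\norm{\mg e^{k}}+(r^{k})^{2}+\sum_{n}\tau\,\norm{w^{n+1/2}}
	\le \big(\text{starter terms}\big)+\sum_{n}\big(\text{consistency and }\Delta g^{n}\text{ cross terms}\big).
\]

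The consistency cross terms $\tau\inner{R_{\phi}^{n},w^{n+1/2}}$ and the scalar $\mr$-side remainders are controlled by Cauchy--Schwarz and Young's inequality, absorbing the $w^{n+1/2}$ factors into the dissipation sum $\sum_{n}\tau\,\norm{w^{n+1/2}}$ and leaving the $O(\tau^{4})$ data identified above; the genuinely nonlinear cross term $\inner{e^{n+1}-e^{n},\,\mr(t^{n+1/2})\,\Delta g^{n}}$ is, after substituting $e^{n+1}-e^{n}=-\tau w^{n+1/2}-\tau R_{\phi}^{n}$ to expose a factor $\tau$, handled with the Lipschitz bound $\|\Delta g^{n}\|_{AP}\le C\,\|\bphi^{n+1/2}-\phi(t^{n+1/2})\|_{AP}\le C(\|e^{n}\|_{AP}+\|e^{n-1}\|_{AP})+O(\tau^{2})$, valid because $\phi$ is bounded (Proposition~\ref{thm:AP}(1)) and $F_{1}\ge C_{1}>0$. \textbf{The main obstacle} is that this estimate is stated in the plain norm $\|e^{n}\|_{AP}$, whereas the energy argument only supplies $\norm{\mg e^{n}}$; since $\mg=\prod_{j}(\Delta+q_{j}^{2})$ has a nontrivial kernel, $\|e^{n}\|_{AP}$ is \emph{not} dominated by $\norm{\mg e^{n}}$, so Gronwall cannot be closed naively. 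I would resolve this by estimating the plain norm directly from the evolution: summing $e^{n+1}-e^{n}=-\tau w^{n+1/2}-\tau R_{\phi}^{n}$ and applying Cauchy--Schwarz gives $\|e^{n}\|_{AP}^{2}\le C\,t^{k}(\sum_{j<n}\tau\,\norm{w^{j+1/2}}+\sum_{j<n}\tau\,\|R_{\phi}^{j}\|_{AP}^{2})$, re-expressing $\|e^{n}\|_{AP}$ through the same dissipation sum on the left-hand side. Substituting back bounds every cross term by the accumulated error energy at previous steps plus the $O(\tau^{4})$ data, and a discrete Gronwall inequality yields the claimed estimate with $C$ independent of $\tau$; the delicate point is ensuring the dissipation $\sum_{n}\tau\,\norm{w^{n+1/2}}$ is simultaneously strong enough to absorb the $w$-factors and to control $\|e^{n}\|_{AP}$, which is where the hypotheses that $\phi^{0}$ is almost periodic and the time derivatives are bounded are used.
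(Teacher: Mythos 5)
Your proposal follows essentially the same route as the paper's proof: subtract the exact SAV system from the scheme, test the three error equations with $w^{n+1/2}$, $-(e^{n+1}-e^{n})$ and $2r^{n+1/2}$ so that the indefinite coupling term $r^{n+1/2}\inner{u^{n+1/2},e^{n+1}-e^{n}}$ cancels, bound the Crank--Nicolson truncation errors by Peano integral remainders involving $\phi_{ttt}$ and $r_{ttt}$, control $u^{n+1/2}-u(t^{n+1/2})$ by a Lipschitz bound in terms of $\|e^{n}\|_{AP}+\|e^{n-1}\|_{AP}$, and close with a discrete Gronwall inequality. The one substantive point where you go beyond the paper is the obstacle you flag: the paper's final per-step inequality carries $C\tau(\norm{e^{n}}+\norm{e^{n-1}})$ on the right while only $\norm{\mg e^{n}}$ and $(r^{n})^{2}$ telescope on the left, and since $\mg=\prod_{j}(\Delta+q_{j}^{2})$ annihilates every Fourier mode with $|\mbk|=q_{j}$ (precisely the dominant modes of the quasicrystalline patterns), the plain norm is indeed not controlled by $\norm{\mg e^{n}}$; the paper invokes Gronwall at this point without comment. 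Your repair --- retaining part of the dissipation sum $\sum_{n}\tau\norm{w^{n+1/2}}$ on the left and recovering $\|e^{n}\|_{AP}^{2}\le C\,t^{n}\sum_{j<n}\tau\bigl(\norm{w^{j+1/2}}+\|R_{\phi}^{j}\|_{AP}^{2}\bigr)$ from the telescoped evolution equation --- is a legitimate way to close the argument, and your observation that $e^{1},r^{1}$ must be supplied by a second-order starter for the stated bound to hold is likewise a detail the paper leaves implicit.
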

\begin{proof}
	Let's subtract \eqref{eq:partial.SAV} from \eqref{eq:SAV.CN} at $t^{n+1/2}$
	\begin{equation}
		\begin{aligned}
			e^{n+1} - e^{n} = -\tau w^{n+1/2} + T_{1}^{n+1/2},
		\end{aligned}
		\label{eq:SAV.CN.CH.err.pf1.a}
	\end{equation}
	\begin{equation}
		\begin{aligned}
			w^{n+1/2} = \mg^{2} e^{n+1/2} + \mr^{n+1/2} u^{n+1/2} - \mr(t^{n+1/2}) u(t^{n+1/2}),
		\end{aligned}
		\label{eq:SAV.CN.CH.err.pf1.b}
	\end{equation}
	\begin{equation}
		\begin{aligned}
			r^{n+1} - r^{n} & = \frac{1}{2} \inner{ u^{n+1/2}, \phi^{n+1}-\phi^{n} }
			\\
			& \quad - \frac{1}{2} \inner{ u(t^{n+1/2}), \tau \phi_{t}(t^{n+1/2}) } + T_{2}^{n+1/2},
		\end{aligned}
		\label{eq:SAV.CN.CH.err.pf1.c}
	\end{equation}
	where the truncation errors are given by
		\begin{align}
			T_{1}^{n+1/2} & = \tau \phi_{t}(t^{n+1/2}) - \left( \phi(t^{n+1}) - \phi(t^{n}) \right),
			\\
			T_{2}^{n+1/2} & = \tau \mr_{t}(t^{n+1/2}) - \left( \mr(t^{n+1}) - \mr(t^{n}) \right).
		\end{align}
	With the Taylor expansion, the truncation errors can be rewritten as
		\begin{align}
			T_{1}^{n+1/2} & = \frac{1}{2} \int_{t^{n+1}}^{t^{n+1/2}} (t^{n+1}-s)^{2} \phi_{ttt}(s) ds
				- \frac{1}{2} \int_{t^{n}}^{t^{n+1/2}} (t^{n}-s)^{2} \phi_{ttt}(s) ds,
			\\
			T_{2}^{n+1/2} & = \frac{1}{2} \int_{t^{n+1}}^{t^{n+1/2}} (t^{n+1}-s)^{2} \mr_{ttt}(s) ds
				- \frac{1}{2} \int_{t^{n}}^{t^{n+1/2}} (t^{n}-s)^{2} \mr_{ttt}(s) ds.
		\end{align}

	Firstly, making the almost periodic inner product of \eqref{eq:SAV.CN.CH.err.pf1.a} with $w^{n+1/2}$ yields
	\begin{equation}
		\inner{e^{n+1}-e^{n}, w^{n+1/2}} + \tau \norm{w^{n+1/2}} = \inner{T_{1}^{n+1/2}, w^{n+1/2}}.
		\label{eq:SAV.CN.CH.err.pf2.a}
	\end{equation}
	Then its right-hand term can be bounded by
	\begin{equation}
		\begin{aligned}
			& \inner{T_{1}^{n+1/2}, w^{n+1/2}}
			\leq \frac{\tau}{2} \norm{w^{n+1/2}} + \frac{C}{\tau} \norm{T_{1}^{n+1/2}}
			\\
			& \leq \frac{\tau}{2} \norm{w^{n+1/2}}
				+ C \tau^{4} \int_{t^{n}}^{t^{n+1}} \norm{\phi_{ttt}(s)} ds.
		\end{aligned}
		\label{eq:SAV.CN.CH.err.pf3.a}
	\end{equation}

	Secondly, by taking the almost periodic inner products of \eqref{eq:SAV.CN.CH.err.pf1.b} with $-(e^{n+1}-e^{n})$,
	we obtain
	\begin{equation}
		\begin{aligned}
			& -\inner{w^{n+1/2}, e^{n+1}-e^{n}} = -\frac{1}{2} \left( \norm{\mg e^{n+1}} - \norm{\mg e^{n}} \right)
			\\
			& \quad \quad \quad \quad \quad
			- \inner{\mr^{n+1/2}u^{n+1/2} - \mr(t^{n+1/2})u(t^{n+1/2}), e^{n+1}-e^{n}}.
		\end{aligned}
		\label{eq:SAV.CN.CH.err.pf2.b}
	\end{equation}
	Without the minus sign, the second term on the right-hand side in the above equation
	can be transformed into
	\begin{equation}
		\begin{aligned}
			& \quad \inner{\mr^{n+1/2}u^{n+1/2} - \mr(t^{n+1/2})u(t^{n+1/2}), e^{n+1}-e^{n}}
			\\
			& = r^{n+1/2} \inner{u^{n+1/2}, e^{n+1}-e^{n}}
			\\
			& \quad
			+ \mr(t^{n+1/2}) \inner{u^{n+1/2} - u(t^{n+1/2}), e^{n+1}-e^{n}}.
		\end{aligned}
		\label{eq:SAV.CN.CH.err.pf3.b.0}
	\end{equation}
	Note that $|\mr(t)| \leq C$ and
	\begin{equation}
		\begin{aligned}
			\| u^{n+1/2} - u(t^{n+1/2}) \|_{AP}
			& \leq C \| \mn'(\bphi^{n+1/2}) - \mn'(\phi(t^{n+1/2})) \|_{AP}
			\\
			& \leq C \left( \|e^{n}\|_{AP} + \|e^{n-1}\|_{AP} \right).
		\end{aligned}
		\label{eq:SAV.CN.CH.err.pf3.b1}
	\end{equation}
	The last term on the right-hand side of \eqref{eq:SAV.CN.CH.err.pf3.b.0} can be estimated by
	\begin{equation}
		\begin{aligned}
			& \quad \mr(t^{n+1/2}) \inner{u^{n+1/2} - u(t^{n+1/2}), e^{n+1}-e^{n}}
			\\
			& = \mr(t^{n+1/2}) \inner{u^{n+1/2} - u(t^{n+1/2}), -\tau w^{n+1/2}+T_{1}^{n+1/2}}
			\\
			& \leq \frac{\tau}{2} \norm{w^{n+1/2}} + C \tau \norm{u^{n+1/2} - u(t^{n+1/2})}
			+ \frac{C}{\tau} \norm{T_{1}^{n+1/2}}
			\\
			& \leq \frac{\tau}{2} \norm{w^{n+1/2}}
			+ C \tau \left( \norm{e^{n}} + \norm{e^{n-1}} \right)
			\\
			& \quad + C \tau^{4} \int_{t^{n}}^{t^{n+1}} \norm{\phi_{ttt}(s)} ds.
		\end{aligned}
		\label{eq:SAV.CN.CH.err.pf3.b}
	\end{equation}

	Thirdly, multiplying the both sides of \eqref{eq:SAV.CN.CH.err.pf1.c} by $2r^{n+1/2}$, we obtain
	\begin{equation}
		\begin{aligned}
			& (r^{n+1})^{2} - (r^{n})^{2}
			= r^{n+1/2} \inner{u^{n+1/2}, \phi^{n+1}-\phi^{n}}
			\\
			& \quad \quad \quad \quad \quad
			- r^{n+1/2} \inner{u(t^{n+1/2}), \tau \phi_{t}(t^{n+1/2})} + 2r^{n+1/2} T_{2}^{n+1/2}.
		\end{aligned}
		\label{eq:SAV.CN.CH.err.pf2.c}
	\end{equation}
	The first two terms on the right-hand side of \eqref{eq:SAV.CN.CH.err.pf2.c} can be rewritten as
	\begin{equation}
		\begin{aligned}
			& \quad r^{n+1/2} \inner{u^{n+1/2}, \phi^{n+1}-\phi^{n}}
			- r^{n+1/2} \inner{u(t^{n+1/2}), \tau \phi_{t}(t^{n+1/2})}
			\\
			& = r^{n+1/2} \inner{u^{n+1/2}, e^{n+1}-e^{n}}
			- r^{n+1/2} \inner{u(t^{n+1/2}), T_{1}^{n+1/2}}
			\\
			& \quad + r^{n+1/2} \inner{ u^{n+1/2} - u(t^{n+1/2}), \phi(t^{n+1})-\phi(t^{n}) },
		\end{aligned}
		\label{eq:SAV.CN.CH.err.pf3.c}
	\end{equation}
	where the last two terms on the right-hand side satisfy
	\begin{equation}
		\begin{aligned}
			- r^{n+1/2} \inner{u(t^{n+1/2}), T_{1}^{n+1/2}}
			& \leq C \tau \left( (r^{n+1})^{2} + (r^{n})^{2} \right)
			\\
			& \quad + C \tau^{4} \int_{t^{n}}^{t^{n+1}} \norm{\phi_{ttt}(s)} ds,
		\end{aligned}
	\end{equation}
	and
	\begin{equation}
		\begin{aligned}
			& \quad r^{n+1/2} \inner{ u^{n+1/2} - u(t^{n+1/2}), \phi(t^{n+1})-\phi(t^{n}) }
			\\
			& \leq C \tau \norm{\phi_{t}} \left( (r^{n+1/2})^{2}
			+ \norm{ u^{n+1/2} - u(t^{n+1/2}) } \right)
			\\
			& \leq C \tau \left( (r^{n+1})^{2} + (r^{n})^{2} + \norm{e^{n}} + \norm{e^{n-1}} \right).
		\end{aligned}
	\end{equation}
	Therefore the last term on the right-hand side of \eqref{eq:SAV.CN.CH.err.pf2.c}
	can be bounded by
	\begin{equation}
		\begin{aligned}
			2r^{n+1/2} T_{2}^{n+1/2} \leq C \tau \left( (r^{n+1})^{2} + (r^{n})^{2} \right)
				+ C \tau^{4} \int_{t^{n}}^{t^{n+1}} \left| r_{ttt}(s) \right|^{2} ds.
		\end{aligned}
	\end{equation}
%
	With the above estimates, adding \eqref{eq:SAV.CN.CH.err.pf2.a}, \eqref{eq:SAV.CN.CH.err.pf2.b}
	and \eqref{eq:SAV.CN.CH.err.pf2.c} together leads to
	\begin{equation}
		\begin{aligned}
			& \quad \frac{1}{2} \left( \norm{\mg e^{n+1}} - \norm{\mg e^{n}} \right) + (r^{n+1})^{2} - (r^{n})^{2}
			\\
			& \leq C \tau \left( \norm{e^{n}} + \norm{e^{n-1}} + (r^{n+1})^{2} + (r^{n})^{2} \right)
			\\
			& \quad + C \tau^{4} \int_{t^{n}}^{t^{n+1}} \norm{\phi_{ttt}(s)} ds
			+ C \tau^{4} \int_{t^{n}}^{t^{n+1}} \left| r_{ttt}(s) \right|^{2} ds.
		\end{aligned}
	\end{equation}
	Then we obtain the desired conclusion by summing over $n$, $n =
	0,1,\cdots,k-1$, and using the Gronwall inequality.
\end{proof}

\subsection{Spectral deferred correction (SDC) approach}
\label{subsec:SDC}

The SDC approach\,\cite{Dutt2000, Xu2019} is an efficient method to improve the numerical precision for an
existing scheme using spectral collocation points.
Firstly, we introduce the basic idea of the SDC method.
Integrating both sides of the Eqn.\,\eqref{eq:partial} with respect to $t$, we have
\begin{equation}
	\phi(t) = \phi(0) - \int_{0}^{t} \mw(\phi(\tau)) \,d\tau.
	\label{eq:SDC.integrate}
\end{equation}
Suppose the approximation solution $\phi_{[0]}(t)$ has been calculated by some
numerical schemes.
Then we define the residual $R_{[0]}(t)$ and the error $\epsilon_{[0]}(t)$ as
\begin{equation}
	R_{[0]}(t) = \phi(0) - \int_{0}^{t} \mw(\phi_{[0]}(\tau)) \,d\tau - \phi_{[0]}(t),
	\label{eq:SDC.residual}
\end{equation}
\begin{equation}
	\epsilon_{[0]}(t) = \phi(t) - \phi_{[0]}(t).
	\label{eq:SDC.error}
\end{equation}
Replacing \eqref{eq:SDC.error} into \eqref{eq:SDC.integrate} yields
\begin{equation}
	\phi(t) = \phi(0) - \int_{0}^{t} \mw(\phi_{[0]}(\tau) + \epsilon_{[0]}(\tau)) \,d\tau.
	\label{eq:SDC.integrate.phi}
\end{equation}
Then we insert \eqref{eq:SDC.integrate.phi} into \eqref{eq:SDC.error} and subtract \eqref{eq:SDC.residual}
\begin{equation}
	\epsilon_{[0]}(t) - R_{[0]}(t) = - \int_{0}^{t} \mw(\phi_{[0]}(\tau)
		+ \epsilon_{[0]}(\tau)) d\tau + \int_{0}^{t} \mw(\phi_{[0]}(\tau)) d\tau.
	\label{eq:SDC.error.residual}
\end{equation}
By taking the derivative of both sides of the above equation, we obtain
\begin{equation}
	\frac{d\epsilon_{[0]}(t)}{dt} = - \mw(\phi_{[0]}(t) + \epsilon_{[0]}(t)) + \mw(\phi_{[0]}(t)) + \frac{dR_{[0]}(t)}{dt}.
	\label{eq:SDC.derivative}
\end{equation}

Then, we apply the SDC approach into the second-order SAV/CN scheme to improve the
numerical accuracy.  We denote this strategy as SAV/CN+SDC.
To calculate the integral precisely, we adopt the following Chebyshev nodes,
\begin{equation}
	t^{n} = \frac{T}{2} - \frac{T}{2} \cos\left( \frac{n\pi}{N_{T}} \right),
	~~ n = 0, 1, \cdots, N_{T}.
	\label{eq:SDC.tn}
\end{equation}
The time step size is $\tau^{n} = t^{n+1} - t^{n}$.
To obtain the approximation solution $\phi_{[0]}(t)$, we rewrite the SAV/CN scheme as
\begin{align}
	\frac{\phi_{[0]}^{n+1}-\phi_{[0]}^{n}}{\tau^{n}} & = - \mw_{[0]}^{n+1/2},
	\nonumber
	\\
	\mw_{[0]}^{n+1/2} & = \mg^2\phi_{[0]}^{n+1/2}
		+ \frac{\mr_{[0]}^{n+1/2}}{\sqrt{F_{1}(\bphi_{[0]}^{n+1/2})}} \mn'(\bphi_{[0]}^{n+1/2}),		
	\label{eq:SDC.SAV.CN.phi.b}
	\\
	\frac{\mr_{[0]}^{n+1}-\mr_{[0]}^{n}}{\tau^{n}}
	& = \inner{\frac{\mn'(\bphi_{[0]}^{n+1/2})}{2\sqrt{F_{1}(\bphi_{[0]}^{n+1/2})}},
		\frac{\phi_{[0]}^{n+1}-\phi_{[0]}^{n}}{\tau^{n}} },
	\nonumber
\end{align}
where $ \phi_{[0]}^{0} = \phi^{0} $, $\mr_{[0]}^{n+1/2} = (\mr_{[0]}^{n+1} + \mr_{[0]}^{n})/2$,
$\phi_{[0]}^{n+1/2} = (\phi_{[0]}^{n+1} + \phi_{[0]}^{n})/2$ and
$\bphi_{[0]}^{n+1/2} = (3\phi_{[0]}^{n} - \phi_{[0]}^{n-1})/2$.
We adopt a similar strategy to discretize \eqref{eq:SDC.derivative} as follows
\begin{subequations}
	\begin{align}
		\frac{\epsilon_{[0]}^{n+1}-\epsilon_{[0]}^{n}}{\tau^{n}}
		& = - \mw_{[0]}^{n+1/2,\epsilon} + \mw_{[0]}^{n+1/2}
		+ \frac{R_{[0]}^{n+1} - R_{[0]}^{n}}{\tau^{n}},
		\label{eq:SDC.SAV.CN.epsilon.a}
		\\
		\mw_{[0]}^{n+1/2,\epsilon} & = \mg^2 \phi_{[0]}^{n+1/2,\epsilon}
		+ \frac{\mr_{[0]}^{n+1/2}}{\sqrt{F_{1}(\bphi_{[0]}^{n+1/2})}} \mn'(\bphi_{[0]}^{n+1/2,\epsilon}),
		\label{eq:SDC.SAV.CN.epsilon.b}
	\end{align}
	\label{eq:SDC.SAV.CN.epsilon}
\end{subequations}

\noindent
where $\epsilon_{[0]}^{0} = \phi(0) - \phi_{[0]}^{0} = 0$,
$\phi_{[0]}^{n+1/2,\epsilon} = \phi_{[0]}^{n+1/2} + \epsilon_{[0]}^{n+1/2}$,
and $\bphi_{[0]}^{n+1/2,\epsilon} = \bphi_{[0]}^{n+1/2} + \bar{\epsilon}_{[0]}^{n+1/2}$.
Substituting \eqref{eq:SDC.SAV.CN.phi.b} and \eqref{eq:SDC.SAV.CN.epsilon.b} into \eqref{eq:SDC.SAV.CN.epsilon.a}
and eliminating the residual by \eqref{eq:SDC.residual}, we obtain the following linear solvable equation
\begin{equation}
	\begin{aligned}
		& \left( I + \frac{\tau^{n}}{2} \mg^{2} \right) \epsilon_{[0]}^{n+1}
		= \left( I - \frac{\tau^{n}}{2} \mg^{2} \right) \epsilon_{[0]}^{n}
		- \int_{t^{n}}^{t^{n+1}} \mw(\phi_{[0]}(\tau^{n})) d\tau^{n}
		\\
		& \quad - \phi_{[0]}^{n+1} + \phi_{[0]}^{n}
		- \tau^{n} \frac{\mr_{[0]}^{n+1/2}}{\sqrt{F_{1}(\bphi_{[0]}^{n+1/2})}}
		\left\{ \mn'(\bphi_{[0]}^{n+1/2,\epsilon}) - \mn'(\bphi_{[0]}^{n+1/2}) \right\}.
	\end{aligned}
	\label{eq:SDC.CN.epsilon.update}
\end{equation}
A more accurate solution can be updated by
\begin{equation}
	\phi_{[1]}^{n+1} = \phi_{[0]}^{n+1} + \epsilon_{[0]}^{n+1}.
	\label{eq:SDC.CN.phi.update}
\end{equation}

\subsection{The projection method (PM) discretization}
\label{subsec:PM}

The PM is an accurate approach in computing aperiodic structures that can
avoid the Diophantine approximation error.
The PM is based on the fact that the $d$-dimensional aperiodic structure can be embedded into an
$n$-dimensional periodic structure $(n\geq d)$.
The dimensionality $n$ is determined by the spectrum structures of the aperiodic
structures. In particular, $n$ is the number of linearly independent vectors
over the rational number field which span the spectrum.
In the PM, the $d$-dimensional order parameter $\phi(\mbr)$ can be expanded as
\begin{equation}
    \phi(\mbr) = \sum_{\mbh\in\mathbb{Z}^{n}} \hphi(\mbh)
    e^{i[(\mathcal{P} \cdot \mathbf{Bh})^T \cdot \mbr]}, ~~~
	\mbr\in\mathbb{R}^d.
	\label{eq:Bohr.Fourier.4D}
\end{equation}
where $\mathbf{B} \in \mathbb{R}^{n\times n}$ is an invertible matrix related to
the $n$-dimensional primitive reciprocal lattice.
The projection matrix $\mathcal{P} \in \mathbb{R}^{d\times n}$ depends on the property
of aperiodic structures.
If we consider $d$-dimensional periodic structures, the projection matrix degenerates
to a $d$-order identity matrix.
Therefore, the PM provides a unified framework in calculating periodic and aperiodic crystals.
More details about the PM can refer to \cite{Jiang2014}.
The Fourier coefficient $\hphi(\mbh)$ satisfies
\begin{equation}
	X := \left\{(\hphi(\mbh))_{\mbh\in\mathbb{Z}^n}:
	\hphi(\mbh)\in\mathbb{C}, ~
	\sum_{\mbh\in\mathbb{Z}^n}|\hphi(\mbh)|<\infty \right\}.
\end{equation}
In practice, let $\mathbf{N}=(N_1, N_2, \dots, N_n)\in \mathbb{N}^n$, and
\begin{equation}
	X_{\mathbf{N}} := \{\hphi(\mbh)\in X: \hphi(\mbh) = 0, ~\mbox{for all}~ |h_j|> N_j/2, ~ j=1,2,\dots,n \}.
\end{equation}
The number of elements in the set is $N=(N_1+1)(N_2+1) \cdots (N_n+1)$.
Using the PM, the SAV/CN scheme of full discretization reads
\begin{equation}
	\begin{aligned}
		\hphi^{n+1}(\mbh)-\hphi^{n}(\mbh) & = - \tau \widehat{\mw}^{nh}(\mbh),
		\\
		\widehat{\mw}^{nh}(\mbh) & = \frac{1}{2}
			\prod_{j=1}^m [q_j^2 - (\mathcal{P}\mathbf{Bh})^{T} (\mathcal{P}\mathbf{Bh}) ]^{2}
			[\hphi^{n+1}(\mbh)+\hphi^{n}(\mbh)]
			\\
			& \quad + \frac{\mr^{n+1}+\mr^{n}}{2\sqrt{F_{1}^{nh}[\hat{\Phi}]}} \widehat{\mn'}^{nh}(\mbh),
		\\
		\mr^{n+1}-\mr^{n} & =
			\sum_{\mbh_{1}+\mbh_{2}=\bm{0}} \frac{\widehat{\mn'}^{nh}(\mbh_{1})}{2\sqrt{F_{1}^{nh}[\hat{\Phi}]}}
			[\hphi^{n+1}(\mbh_{2})-\hphi^{n}(\mbh_{2})],
	\end{aligned}
	\label{eq:SAV.CN.dis}
\end{equation}
where
\begin{equation}
	\begin{aligned}
		\widehat{\mn'}^{nh}(\mbh) & = \tep\hphi^{nh}(\mbh)
			- \tal \sum_{\mbh_{1}+\mbh_{2}=\mbh} \hphi^{nh}(\mbh_{1}) \hphi^{nh}(\mbh_{2})
			\\
			& \quad + \sum_{\mbh_{1}+\mbh_{2}+\mbh_{3}=\mbh} \hphi^{nh}(\mbh_{1})
			\hphi^{nh}(\mbh_{2}) \hphi^{nh}(\mbh_{3}),
		\\
		F_{1}^{nh}[\hat{\Phi}]
			& = \frac{\tep}{2} \sum_{\mbh_{1}+\mbh_{2}=\bm{0}} \hphi^{nh}(\mbh_{1}) \hphi^{nh}(\mbh_{2})
			- \frac{\tal}{3} \sum_{\mbh_{1}+\mbh_{2}+\mbh_{3}=\bm{0}} \hphi^{nh}(\mbh_{1})
			\hphi^{nh}(\mbh_{2}) \hphi^{nh}(\mbh_{3})
		\\
			& \quad + \frac{1}{4} \sum_{\mbh_{1}+\mbh_{2}+\mbh_{3}+\mbh_{4}=\bm{0}} \hphi^{nh}(\mbh_{1})
			\hphi^{nh}(\mbh_{2}) \hphi^{nh}(\mbh_{3}) \hphi^{nh}(\mbh_{4}) + C_{1},
		\\
		\hphi^{nh}(\mbh) & = \frac{3\hphi^{n+1}(\mbh) - \hphi^{n}(\mbh)}{2}.
	\end{aligned}
\end{equation}
In the above equations, the nonlinear terms are $n$-dimensional convolutions in the Fourier space.
Directly computing them is extremely expensive.
To avoid this, we use the pseudospectral method through the $n$-dimensional fast Fourier
transformation to compute them efficiently in the $n$-dimensional time domain by simple multiplication.
The mass conservation constraint \eqref{eq:mass.conserve} can be satisfied through
\begin{equation}
	e_{1}^{T} \hat{\Phi} = 0,
\end{equation}
where $e_{1} = (1,0,\cdots,0)^{T} \in \mathbb{R}^{N}$.

\section{Numerical results}
\label{sec:results}

In this section, we present several numerical examples to verify the accuracy
of the SAV/CN and SAV/CN+SDC schemes and to illustrate the advantages of the
higher-order scheme in dynamic evolution.  We also show the influence of
multiple length-scales on the thermodynamic stability of aperiodic structures.

\subsection{Accuracy}
\label{subsec:results.efficiency}

In this subsection, we take the two characteristic length scale iPFC model in one-dimensional
space to test the numerical accuracy of the SAV/CN and SAV/CN+SDC schemes.
The model parameters are set as $q_{1} = \sqrt{2}$, $q_{2} = \sqrt{3}$, $\tep = 10$ and $\tal = 4$.
The computational domain is $[0,2\pi]$.
Correspondingly, the projection matrix $\mathcal{P}$ and $\mathbf{B}$ in the PM both
are $1$. The initial data is chosen as $\phi(x,0) = \sin(x)$.
We check the temporal accuracy by taking the space discretization $N=128$.
The numerical solution of $N_{T}=2048$ is set as the reference solution.
Tab.\,\ref{tab:rate.SDC} shows the errors and convergence rates of the SAV/CN and SAV/CN+SDC schemes at $T=0.2$.
One can observe that the numerical accuracy of the SAV/CN scheme is second-order and
can be improved to fourth-order by the SDC approach.
\begin{table}[htbp]
    \centering
    \caption{Errors and convergence rates of the SAV/CN and SAV/CN+SDC schemes for the Allen-Cahn equation.
		The numerical solution of $N_{T} = 2048$ is regarded as the reference value.}
	\label{tab:rate.SDC}
    \begin{tabular}{|*{6}{c|}}
        \hline
        \diagbox[width=9em]{Scheme}{$N_{T}$} & & 64 & 128 & 256 & 512 \\
        \hline
        \multirow{2}*{SAV/CN} & Error & 4.75E-3 & 1.17E-3 & 2.91E-4 & 7.17E-5  \\
        \cline{2-6}
        ~ & Rate & - & 2.01 & 2.02 & 2.07 \\
        \hline
        \multirow{2}*{SAV/CN + SDC} & Error & 1.16E-5 & 6.78E-7 & 4.04E-8 & 2.46E-9  \\
        \cline{2-6}
        ~ & Rate & - & 4.07 & 4.03 & 4.02 \\
        \hline
    \end{tabular}
\end{table}

\subsection{Dynamic evolution}
\label{subsec:evolution}

In this subsection, we simulate the dynamic process of $2$-dimensional
dodecagonal quasicrystal (DDQC) using the iPFC model with two length-scales.
The model parameters are $q_{1} = 1$, $q_{2} = 2\cos(\pi/12)$, $\tep = -2$ and
$\tal = 2$.
The initial value is the DDQC whose spectral distribution and real morphology
are shown in Fig.\,\ref{fig:DDQC.Ini}.
The big blue dot represents the origin and the others are the $24$ basic Fourier modes located on
the circles of radii $q_{1}$ and $q_{2}$, respectively.
\begin{figure}[htbp]
	\centering
	\includegraphics[scale=0.15]{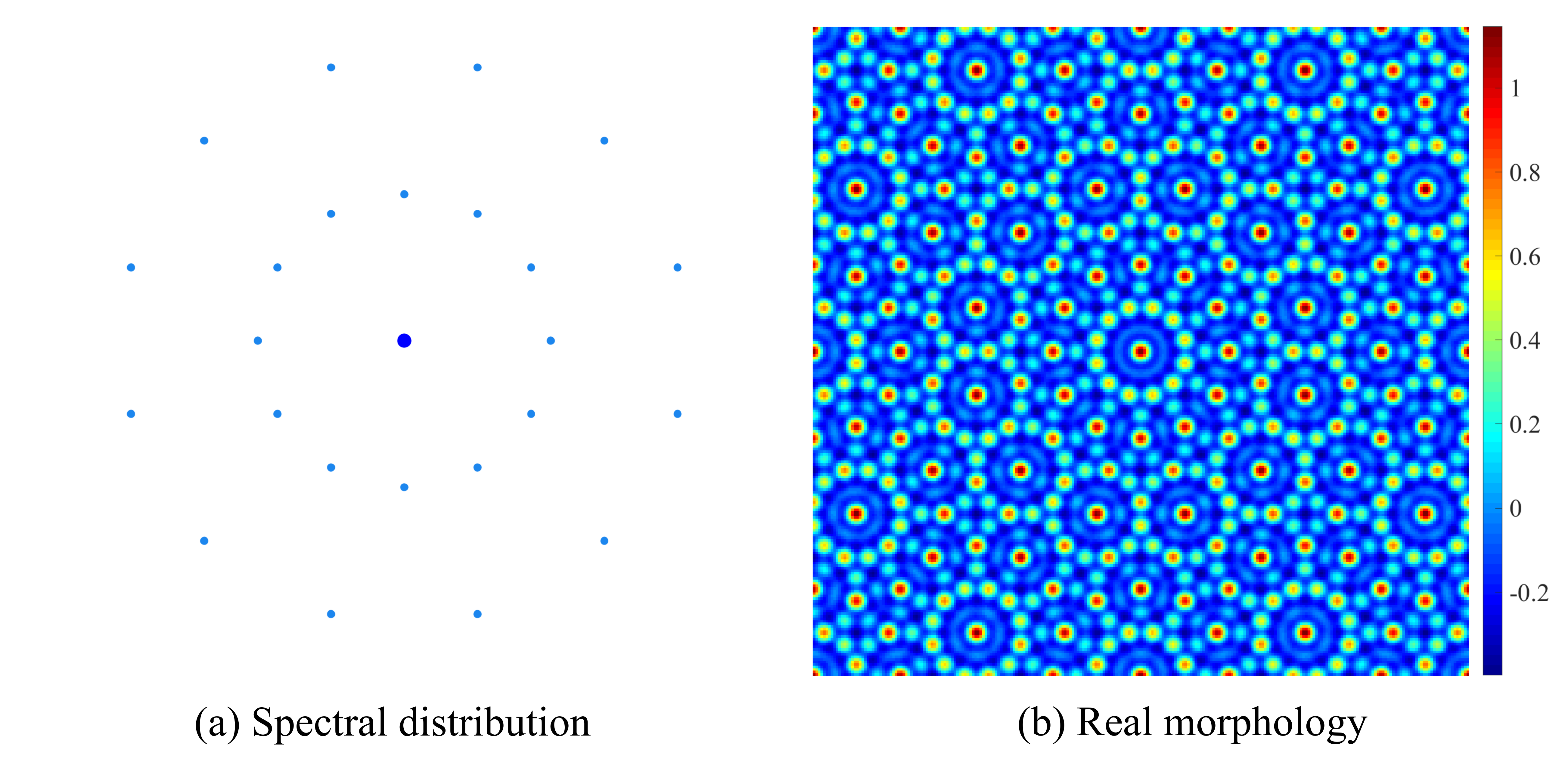}
	\caption{The spectral distribution and the corresponding real morphology of the initial value.}
	\label{fig:DDQC.Ini}
\end{figure}
When calculating the DDQC, we adopt the PM to discretize the spatial functions in
$4$-dimensional space with $24^4$ trigonometric functions.
The $24^4$ basis functions bring a negligible spatial error comparing the temporal error.
The projection matrix in the PM is
\begin{equation}
	\mathcal{P} = \left(
		\begin{array}{cccc}
			1 & \cos(\pi/6) & \cos(\pi/3) & 0 \\
			0 & \sin(\pi/6) & \sin(\pi/3) & 1
		\end{array} \right),
\end{equation}
and the $\mathbf{B}$ is a 4-order identity matrix.

We use the second-order SAV/CN scheme with $N_{T} = 256$ to simulate
the dynamic evolution for the DDQC.
Fig.\,\ref{fig:dynamic} shows the change tendency of the energy value.
The corresponding morphologies at $t = 0, 50, 100, 150, 200$ are presented in
Fig.\,\ref{fig:dynamic.moph}.
As one can see, the proposed scheme satisfies the energy dissipation law.
It should be noted that the value $C_1$ in the SAV approach plays an important
role in computational simulations. In our computation, the $C_1$ chosen as $10^{16}$
which maintains the consistency of original and modified energy values.
\begin{figure}[htbp]
    \centering
    \includegraphics[scale=0.20]{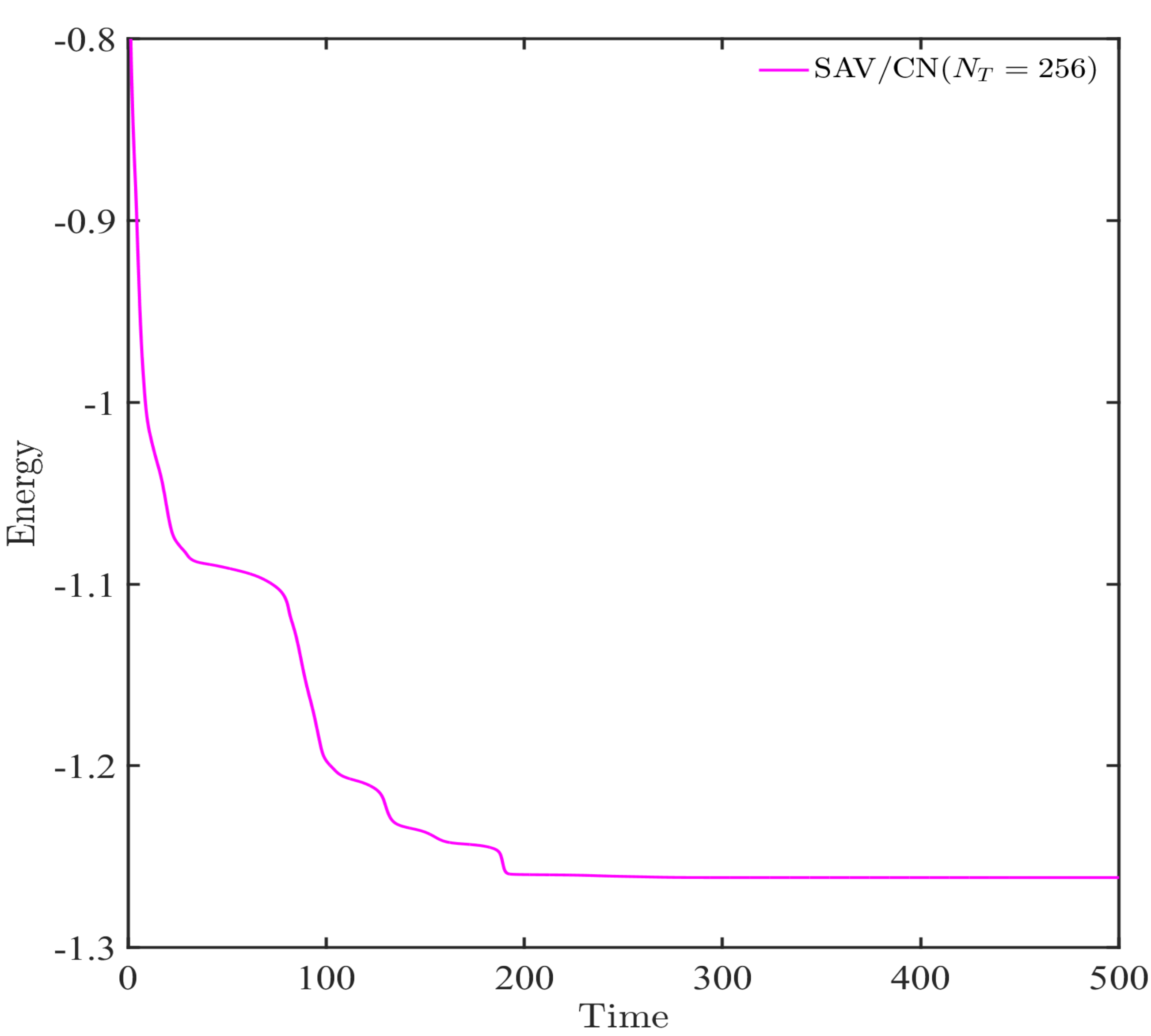}
    \caption{The time evolution of energy which is obtained by the SAV/CN scheme in the case of $N_{T}=256$.
		The model parameters are $q_{1} = 1$, $q_{2} = 2\cos(\pi/12)$, $\tep = -2$ and $\tal = 2$.}
	\label{fig:dynamic}
\end{figure}
\begin{figure}[htbp]
	\centering
	\includegraphics[scale=0.15]{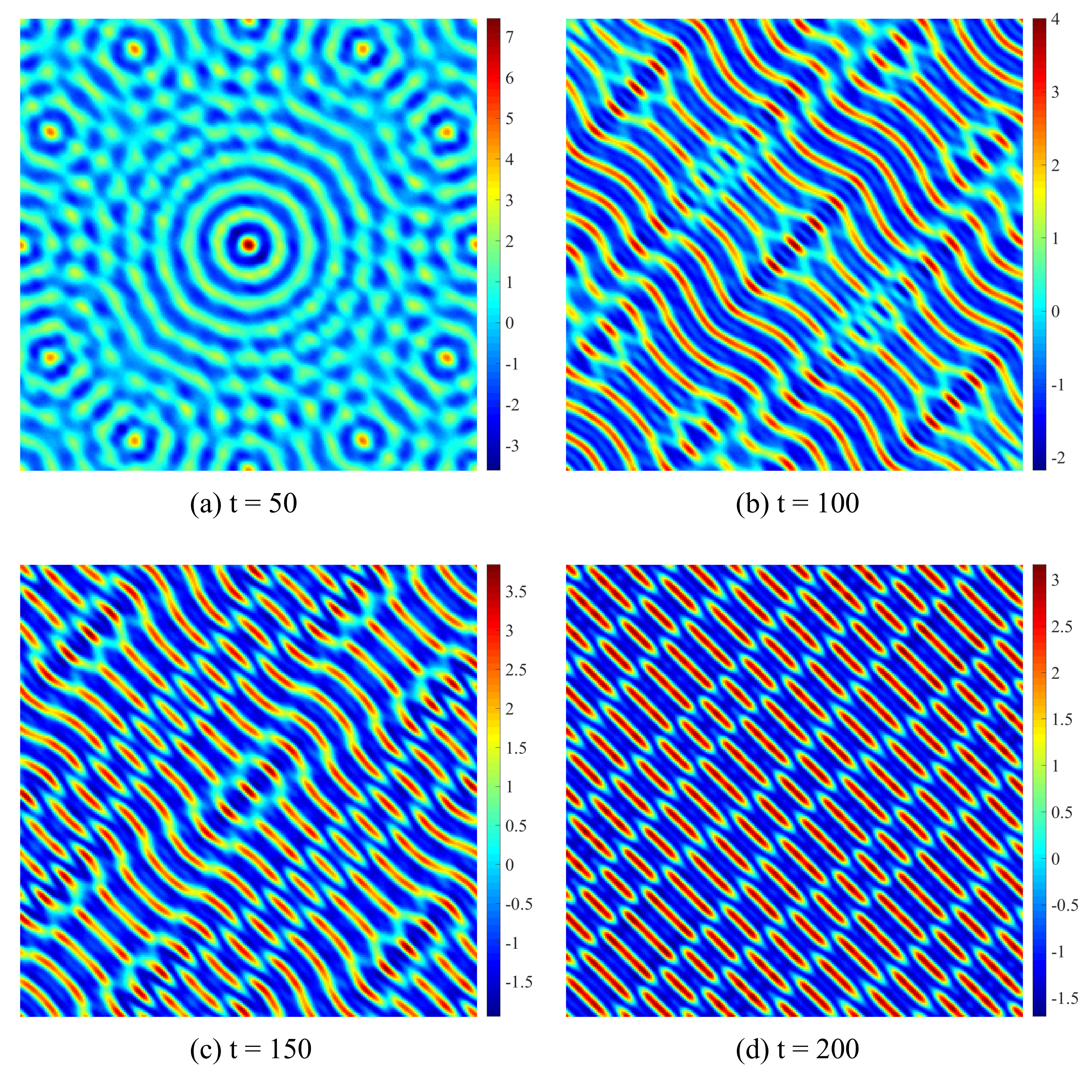}
	\caption{The morphologies of dynamic evolution in Fig.\,\ref{fig:dynamic}.
		Snapshots are taken at $t = 50, ~ 100, ~ 150, ~ 200$, respectively.}
	\label{fig:dynamic.moph}
\end{figure}

To show the role of the high-order methods in dynamic evolution, we give the reference energy $E_{s}$ which is
calculated by the SAV/CN+SDC scheme in the case of $N_{T}=2048$.
Using the reference value as the baseline, Fig.\,\ref{fig:energy.diff} shows the energy difference of the
SAV/CN scheme with $N_{T}=64,~128,~256$ and SAV/CN+SDC method with $N_T=32$.
With the increase of the temporal discretization $N_{T}$, the energy difference decreases.
However, the energy value obtained by the fourth-order SAV/CN+SDC scheme with
$N_{T}=32$ is closer to the reference value than that of the SAV/CN scheme.
It is demonstrated that the higher-order method shows more accurate results with
less time discretization points in the dynamic simulation.
\begin{figure}[htbp]
    \centering
    \includegraphics[scale=0.20]{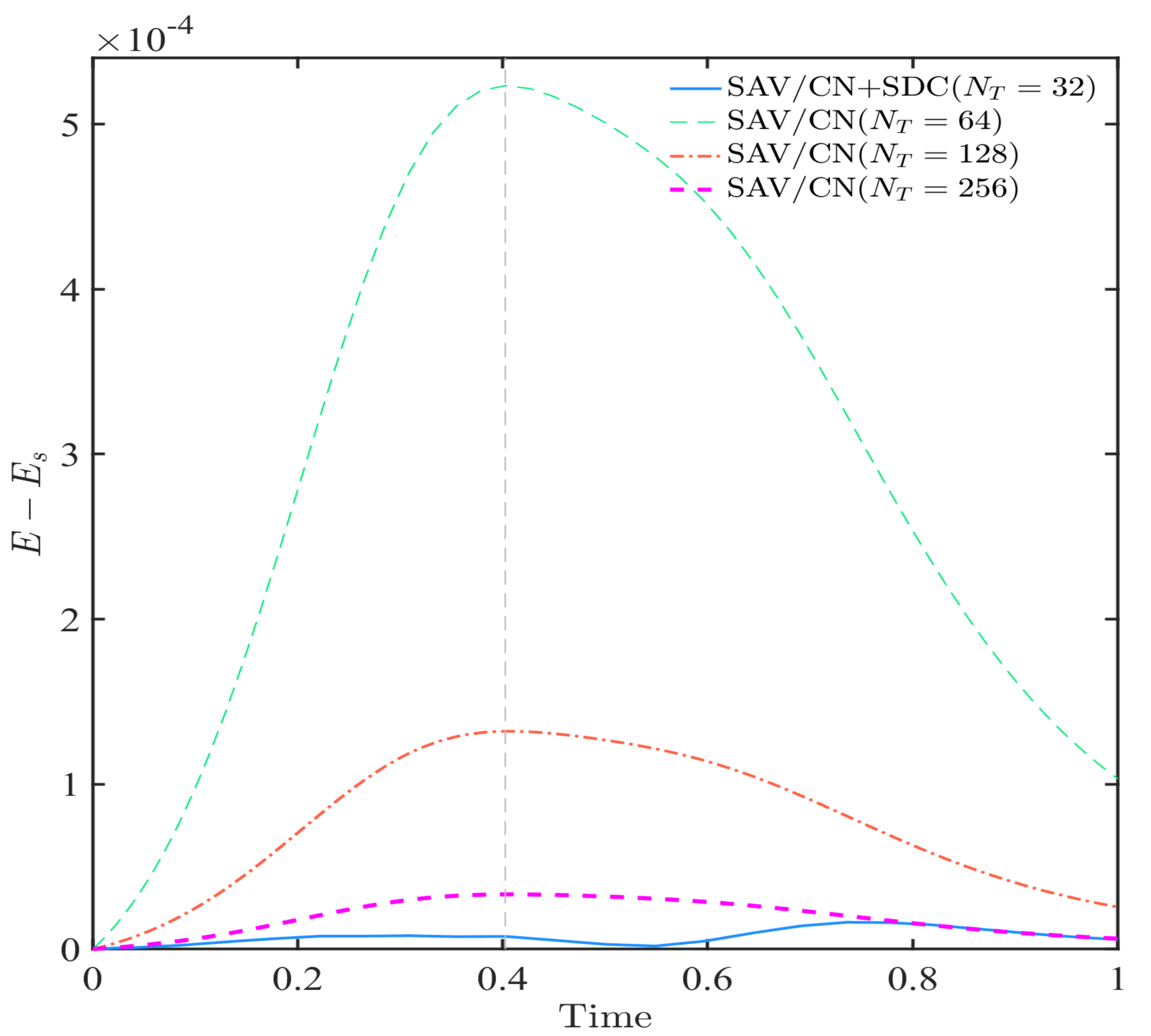}
		\caption{The difference between the numerical energy values and the reference value $E_{s}$.
		The numerical energy values are computed by the two methods: SAV/CN and SAV/CN+SDC.
    	The reference energy value is obtained by the SAV/CN+SDC method in the case of $N_{T}=2048$.
		The model parameters are $q_{1} = 1$, $q_{2} = 2\cos(\pi/12)$, $\tep = -2$ and $\tal = 2$.}
	\label{fig:energy.diff}
\end{figure}
We also give the morphologies of the crucial moment $t = 0.4025$ in Fig.\,\ref{fig:energy.diff.moph}.
The morphology of the reference solution is also set as a reference value to clearly illustrate the differences.
And the conclusions from these results are consistent with the energy evolution curves.
\begin{figure}[htbp]
	\centering
	\includegraphics[scale=0.15]{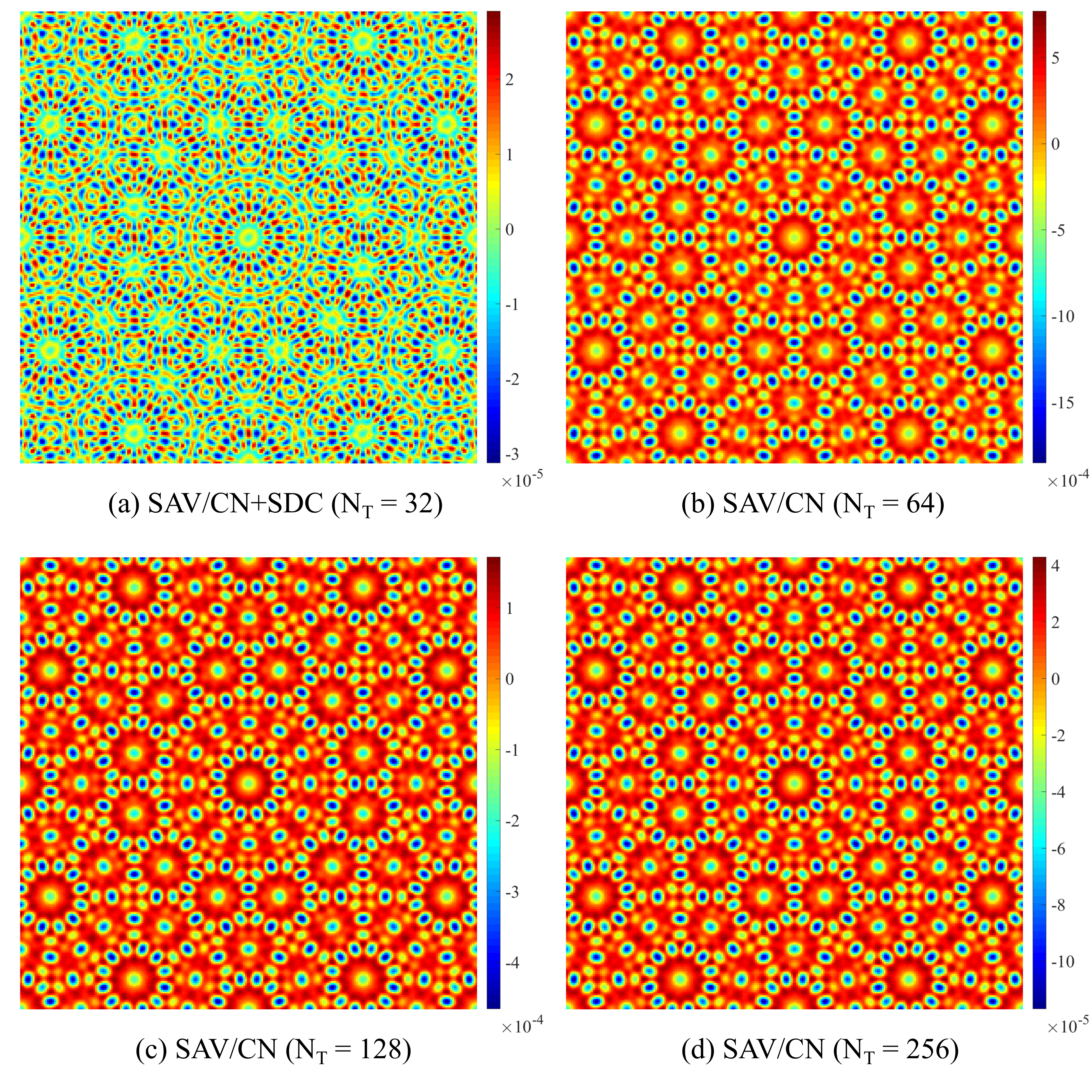}
	\caption{The real morphologies which describe the difference between the numerical solutions
		and the reference value at $t=0.4025$.
		The reference solution is obtained by the SDC/CN+SDC method in the case of $N_{T}=2048$.
		The model parameters are set as $q_{1} = 1$, $q_{2} = 2\cos(\pi/12)$, $\tep = -2$ and $\tal = 2$.}
	\label{fig:energy.diff.moph}
\end{figure}

\subsection{The influence of multiple length-scales potential}
\label{subsec:scales}

In this subsection, we use the dodecagonal quasiperiodic phases as an example to
investigate the influence of multi-length-scale potentials on the stability of aperiodic structures.
Concretely, we consider three, four, and five multiple length scale iPFC models.
The parameters in the potential function are set as $q_{j}=s^{j-1}$, $s=2\cos(\pi/12)$,
$j=1,\cdots,m$, $m=3,4,5$.
The other model parameters are $\tep = -2$ and $\tal = 2$.
In the PM, the $\mathcal{P}$ and $\mathbf{B}$ are consistent with Subsec.\,\ref{subsec:evolution}.
The spatial functions are also discretized in $4$-dimensional space with $24^4$ basis functions.
The SAV/CN approach with $N_{T}=256$ is adopted to simulate the dynamic process.
Fig.\,\ref{fig:scales} lists the corresponding energy evolution plots, initial values, and stationary states.
The first row presents the energy evolutions of DDQCs with different length-scale potentials.
The second and third rows give the spectral distributions and real morphologies
of the $m$-length-scale DDQCs, respectively.
The last row shows the real morphologies of stationary solutions.
From these results, one can see that the three- and four-length-scale potentials
both result in 6-fold symmetric periodic crystal, while the five-length-scale potential
can obtain the 12-fold symmetric quasicrystals.
Therefore increasing the number of characteristic length-scales in the iPFC model is helpful
to stabilize the quasicrystals.

\begin{figure}[htbp]
	\centering
	\includegraphics[scale=0.12]{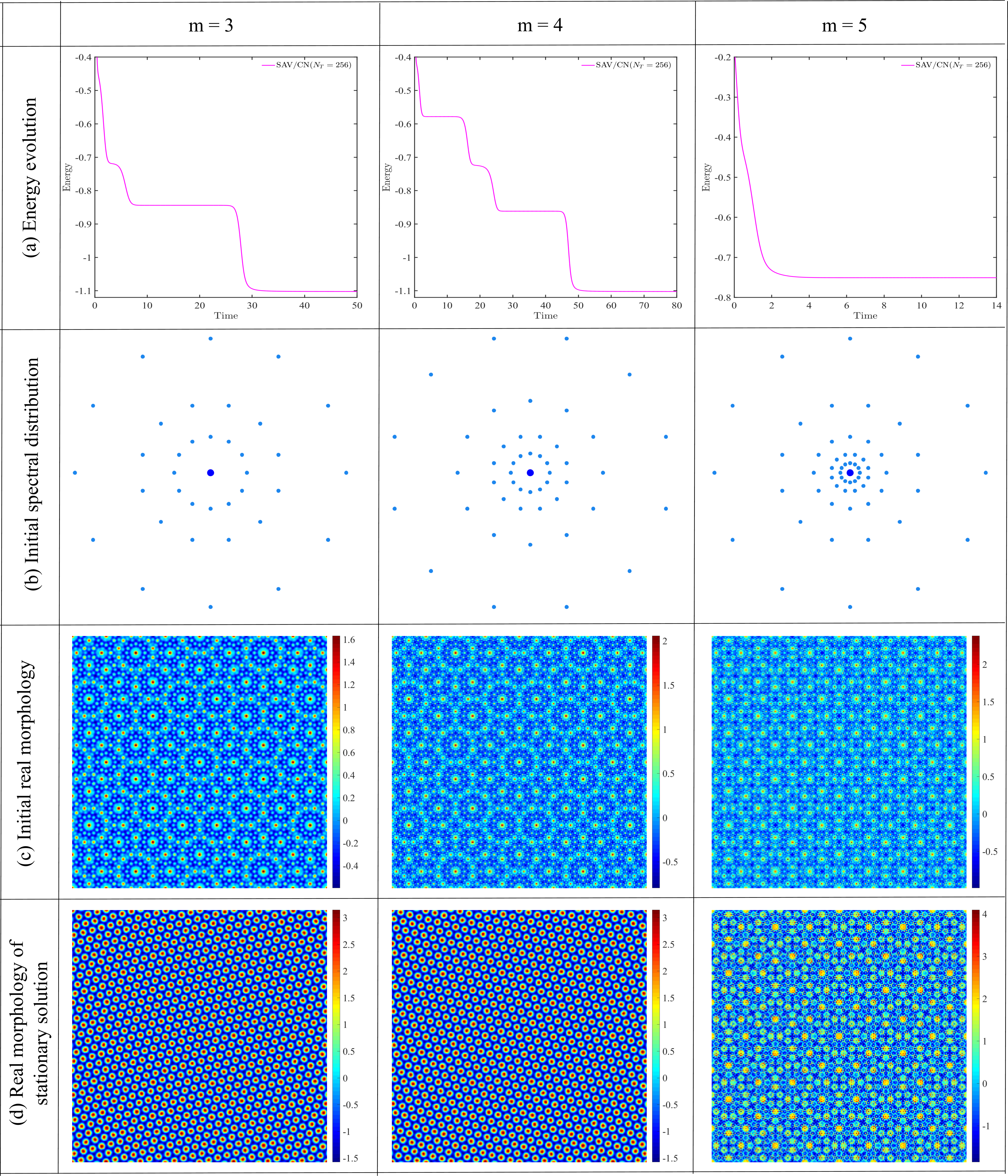}
	\caption{The energy evolutions, initial values and convergence solutions of $m$-length-scale DDQCs under the model
		parameters $\tep = -2$, $\tal = 2$.
		The scale parameters are set as $q_{j} = s^{j-1}$, $j=1,\cdots,m$, $s=2\cos(\pi/12)$. }
	\label{fig:scales}
\end{figure}

\section{Summary}
\label{sec:summary}

For the iPFC model, we proposed a second-order SAV/CN scheme which is unconditionally energy stable in the almost
periodic function sense and gave the error estimate.
Meanwhile, we used the SDC approach to further improve the accuracy of the
second-order scheme to the fourth-order method through a one-step correction.
The PM was applied to discretize spatial functions for computing aperiodic structures to high accuracy.
In numerical simulations, the efficiency of the numerical schemes was demonstrated via the numerical convergence
rates and the comparison of the dynamic evolutions.
By comparing the dynamic evolutions of the DDQCs with different length-scales,
we found that increasing the number of characteristic length-scales in the
iPFC model significantly is helpful to stabilize aperiodic structures.






\medskip
\medskip
\end{document}